\def\blfootnote{\xdef\@thefnmark{}\@footnotetext}
\newtheorem{theorem}{Theorem}[section]
\newtheorem{lemma}[theorem]{Lemma}
\newtheorem{proposition}[theorem]{Proposition}
\theoremstyle{definition}
\newtheorem{definition}[theorem]{Definition}
\newtheorem{remark}[theorem]{Remark}
\numberwithin{equation}{section}
\begin{document}
\title{Fitting height of finite groups admitting a~fixed-point-free automorphism satisfying an~additional polynomial identity}

\author{E. I. Khukhro}
\address{Charlotte Scott Research Centre for Algebra, University of Lincoln, U.K.}
\email{khukhro@yahoo.co.uk}

\author{W. A. Moens$\dagger$}

\address{Faculty of Mathematics, University of Vienna, Austria}
\email{Wolfgang.Moens@univie.ac.at}

\keywords{Finite group; fixed-point-free automorphism; Fitting height; Hall--Higman type theorems}
\subjclass[2010]{}

\let\thefootnote\relax\footnote{$\dagger$The second author  died in May 2022.}

\begin{abstract}
Let  $f(x)$ be a non-zero polynomial with integer coefficients.
An automorphism $\varphi$ of a group $G$ is said to satisfy the elementary abelian identity $f(x)$ if the linear transformation induced by $\varphi$ on every characteristic elementary abelian section of $G$ is annihilated by $f(x)$. We prove that if a finite (soluble) group $G$ admits a fixed-point-free automorphism $\varphi$ satisfying an elementary abelian identity $f(x)$, where $f(x)$ is a primitive polynomial, then  the Fitting height of $G$ is bounded in terms of $\deg(f(x))$. We also prove that if $f(x)$ is any non-zero polynomial and $G$ is a $\sigma'$-group for a finite set of primes $\sigma=\sigma(f(x))$ depending only on $f(x)$, then the Fitting height of $G$ is bounded in terms of  the number $\operatorname{irr}(f(x))$ of different irreducible factors in the decomposition of $f(x)$. These bounds for the Fitting height are stronger than the well-known  bounds in terms of the composition length $\alpha (|\varphi|)$ of $\langle\varphi\rangle$ when $\deg f(x)$ or   $\operatorname{irr}(f(x))$ is small in comparison with $\alpha (|\varphi|)$.
\end{abstract}
\maketitle

\section{Introduction}

 An automorphism $\varphi$ of a group $G$ is said to be \emph{fixed-point-free} if $C_G(\varphi)=1$, that is, the only fixed point of $\varphi$ is $1$.  By the celebrated theorem of Thompson~\cite{tho59}, a finite group with a fixed-point-free automorphism of prime order $p$ is nilpotent (and the nilpotency class is bounded in terms of $p$ by Higman's theorem~\cite{hig}, with the bound made effective by Kreknin and Kostrikin~\cite{kre, kos-kre}). Based on the classification of finite simple groups, Rowley~\cite{row} proved the solubility of a finite group admitting a fixed-point-free automorphism of any order (not necessarily coprime to $|G|$).

If a finite (soluble) group $G$ admits a fixed-point-free automorphism $\varphi$ of coprime order, then the Fitting height of $G$ is at most the composition length $\alpha(|\varphi|)$ of $\langle \varphi\rangle$, which is the best possible bound. In this most general form, this is a special case of Berger's theorem~\cite{ber}. Earlier results under certain restrictions on the primes dividing  $|\varphi|$ were obtained by Shult~\cite{shu} and Gross~\cite{gro}. Their papers contained  important so-called non-modular Hall--Higman type theorems (with Gross also referring to Dade's seminar notes of 1964).

A special case of Dade's theorem~\cite{dad} gives  a bound for the Fitting height of $G$ admitting a fixed-point-free automorphism $\varphi$ of non-coprime order. The bound furnished by Dade's theorem is exponential in $\alpha(|\varphi|)$; a significant improvement to a quadratic bound was recently obtained by Jabara~\cite{jab}.

There are also many results, starting from the papers of Thompson~\cite{tho64} and Dade~\cite{dad}, on bounding the Fitting height of finite soluble groups in terms of fixed points and orders of their (not necessarily cyclic) groups of automorphisms. We refer the reader to the survey of Turull~\cite{tur}, who obtained some of the best results in this area.

Another important direction is studying groups with automorphisms satisfying certain identities. In particular, so-called splitting automorphisms $\varphi$ (satisfying the identity $xx^{\varphi} x^{\varphi^2}\cdots x^{\varphi^{|\varphi|-1}}=1 $) arise in connection with the Hughes subgroup and its  generalizations, and with periodic profinite groups~\cite{alp, bae, ers, esp, hug-tho, jab94, jab96, keg, khu80,khu86,khu89, khu91, khu94,  mak-khu,zel}.

In this paper we consider finite groups $G$ admitting a fixed-point-free automorphism $\varphi$ that satisfies an additional polynomial identity $f(x)$ for an arbitrary  polynomial $f(x)\in \mathbb{Z}[x]$;  see the precise definition below. Imposing this additional condition on the automorphism is justified by obtaining upper bounds for the Fitting height of $G$ independent  of $\alpha (|\varphi|)$. Instead, we obtain bounds for the Fitting height of $G$ depending only on  the degree of $f(x)$ (Theorem~\ref{t0}) or on the number $\operatorname{irr}(f(x))$ of different irreducible factors in the decomposition of $f(x)$
 (Theorem~\ref{t1}). The bounds obtained in our theorems are stronger than the known bound  $\alpha (|\varphi|)$ for coprime automorphisms (or a bound in terms of  $\alpha (|\varphi|)$ in general) when $\deg f(x)$  or   $\operatorname{irr}(f(x))$
  is small in comparison with $\alpha (|\varphi|)$. Our result in terms of $\deg (f(x))$ applies to any finite group $G$ (with a natural and unavoidable condition on the polynomial $f(x)$). The result  in terms of $\operatorname{irr}(f(x))$
  applies to all finite $\sigma'$-groups, where $\sigma=\sigma(f(x))$ is a finite set of primes depending only on $f(x)$.

Earlier this approach to the study of finite groups with fixed-point-free automorphisms was proposed by the second author \cite{moe0,moe1, moe2}.

We now pass to precise definitions and statements of the results. Let $\varphi$ be an automorphism of a group $G$.

\begin{definition}\label{d1}
	 We say that a polynomial $f(x) = a_0 + a_1 x + \cdots + a_d x^d \in \mathbb{Z}[x]$ is an \emph{ordered identity of $\varphi$} if $$g^{a_0} \cdot (g^{\varphi})^{a_1} \cdots (g^{\varphi^d})^{a_d}=1\qquad \text{for all }g\in G.$$
When $G$ is an abelian group, the order of the factors here is unimportant, and then we can simply say  that $f(x)$  is an identity of $\varphi$.
\end{definition}

\begin{definition}\label{d2}
We say that a polynomial $f(x)\in \mathbb{Z}[x]$ is an \emph{elementary abelian identity} of $\varphi$ if $f(x)$ is an identity of the automorphisms induced by $\varphi$ on every characteristic elementary abelian section of $G$. In this case, we also say that $\varphi$ \emph{satisfies the elementary abelian identity} $f(x)$.
\end{definition}

It is clear that an ordered identity of $\varphi$ is also an elementary abelian identity of $\varphi$, but the converse is not true in general.

\begin{remark}
 \label{r1}
  If $f(x)$ is an elementary abelian identity of $\varphi\in \operatorname{Aut}G$ and  $S$ is an elementary abelian $p$-group that is a characteristic  section of $G$, then the $\mathbb{F}_p$-linear transformation induced by $\varphi$ on $S$ regarded as a vector space over $\mathbb{F}_p$ is annihilated by the polynomial $f(x)$ (reduced modulo $p$) in the ordinary sense of linear algebra.
\end{remark}

We further recall that a polynomial $f(x) = a_0 + a_1 x + \cdots + a_d x^d \in \mathbb{Z}[x]$ is \emph{primitive} if  its \emph{content} $\gcd(a_0,a_1,\ldots,a_d)$ is $1$. We can now state our first result.

\begin{theorem}\label{t0}
	Suppose that a finite (soluble) group $G$ admits a fixed-point-free automorphism satisfying an elementary abelian identity  $f(x)\in \mathbb{Z}[x]$, where $f(x)$ is a primitive polynomial. Then the Fitting height of $G$ is at most $2 + 112 \cdot \deg(f(x))^2$.
\end{theorem}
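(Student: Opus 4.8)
The plan is to argue by way of a minimal counterexample and to reduce $G$ to a \emph{sample} configuration on which Hall--Higman type theorems can be used to convert the polynomial identity into a bound on the Fitting height. Set $d:=\deg f(x)$. By Rowley's theorem the fixed-point-free hypothesis already forces $G$ to be soluble, so solubility costs nothing; choosing $G$ of least order among the counterexamples, the standard Fitting-series reductions (in the spirit of the work of Shult, Gross, Berger, Dade, Turull) let one assume that $G=P_1P_2\cdots P_h$, where $h=h(G)$ is the Fitting height, each $P_i$ is a $p_i$-group, each $F_i(G)=P_1\cdots P_i$ is a $\varphi$-invariant normal subgroup, and --- after further Frattini-type reductions --- each $P_i$ is elementary abelian and is a faithful module for a suitable quotient of $P_{i+1}\cdots P_h\langle\varphi\rangle$. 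In this process the elementary abelian identity is inherited by every section $P_i$; and since $f(x)$ is primitive, $f(x)\bmod p_i\neq 0$, so by Remark~\ref{r1} the $\mathbb{F}_{p_i}$-linear transformation induced by $\varphi$ on $P_i$ is annihilated by a nonzero polynomial of degree at most $d$, whence its minimal polynomial over $\mathbb{F}_{p_i}$ has degree at most $d$. Thus the point of the whole argument is that no layer of the chain can make $\varphi$ act with minimal polynomial of degree exceeding $d$ on the corresponding elementary abelian section.

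Next I would separate the coprime and non-coprime behaviour of $\varphi$ with respect to the primes occurring in the chain. For a layer $P_i$ with $p_i\nmid|\varphi|$, the transformation $\varphi|_{P_i}$ is semisimple, and one is in the setting of the non-modular Hall--Higman theorems of Shult and Gross as used by Berger: the Fitting height of the part acting ``above'' $P_i$ is controlled by how many distinct eigenvalue orbits $\varphi$ is forced to display on $P_i$, and hence by the degree of its minimal polynomial there --- so this degree being at most $d$ bounds the Fitting height produced by such layers by a function of $d$. For a layer with $p_i\mid|\varphi|$ one is in the modular situation: Hall--Higman Theorem~B says that a $p_i$-element of order $p_i^{k}$ acting faithfully on a module over $\mathbb{F}_{p_i}$ has minimal polynomial of degree at least $p_i^{k}-p_i^{k-1}$ outside the exceptional configuration, so $p_i^{k}-p_i^{k-1}\le d$; this bounds the $p_i$-part of $|\varphi|$ acting nontrivially below $P_i$, and, fed into the Dade--Jabara analysis of how non-coprime primes contribute to the Fitting height, bounds that contribution quadratically in $d$.

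The heart of the proof is then a bookkeeping over the whole chain $P_1,\dots,P_h$ rather than a black-box appeal to Berger's or Jabara's bounds (neither of which applies directly, since $|\varphi|$ itself need not be bounded in terms of $d$ --- already $\varphi$ acting by a primitive root of unity shows this). One shows that each layer that genuinely raises the Fitting height must force $\varphi$ to act, on the bottom elementary abelian section below it, with a minimal polynomial whose degree grows with the amount of Fitting height accumulated so far; since that degree is capped by $d$, the chain can be long only by a bounded amount, and collecting the coprime estimate of the previous paragraph together with the non-coprime one yields $h\le 2+112\,d^2$. The quadratic dependence and the explicit constant come out of the worst-case interleaving of the two Hall--Higman estimates and the exceptional cases of Theorem~B; the additive $2$ absorbs the bottom nilpotent layer, which is unconstrained, and one further non-generic layer.

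The main obstacle, as always in Fitting-height problems of Hall--Higman type, is the exceptional case of Theorem~B and its relative and iterated versions, in which the minimal-polynomial degree drops to $p^{k}-p^{k-1}$ and a ``phantom'' $p$-subgroup is forced to sit below the section under analysis. Controlling how the identity $f(x)$ --- or a derived polynomial of no larger degree obtained by passing to sections and quotients --- propagates through such exceptional layers, bounding how many of them can occur, and showing that each costs only $O(d)$ or $O(d^{2})$ in Fitting height rather than an unbounded amount, is the delicate part of the argument and is exactly what pins down the shape of the bound. A secondary but necessary point is to arrange the initial reductions so that the elementary abelian identity is genuinely inherited by the sample configuration (the notion of Definition~\ref{d2} is tailored for this) and to keep track of primitivity, so that no reduction modulo a prime annihilates the polynomial and destroys the constraint.
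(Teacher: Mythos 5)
Your proposal has a genuine gap, and it is precisely at the point where you explicitly rule out the approach the paper actually takes. You write that a black-box appeal to Jabara's theorem cannot work because $|\varphi|$ itself need not be bounded in terms of $d$. That observation is correct, but the paper's key device, Proposition~\ref{pr-weak}, shows that the order of the automorphism $\varphi$ induces on $\bar G/F(\bar G)$ (where $\bar G=G/O_{q',q}(G)$) \emph{is} bounded by $(2d)^{2d}$, with $\alpha\leqslant 4d$. The structural fact that makes this possible is Lemma~\ref{l-order}: writing $\psi$ for the $p$-part of $\bar\varphi$ and $p^m$ for the order of its action on a $\bar\varphi$-invariant Hall $p'$-subgroup $H$ of $\bar G$, the element $\psi^{p^m}$ centralises a Hall $p'$-subgroup of $\bar G\langle\bar\varphi\rangle$, hence lies in $O_p(\bar G\langle\bar\varphi\rangle)\leqslant F(\bar G\langle\bar\varphi\rangle)$, and so the induced automorphism of $\bar G/F(\bar G)$ has $p$-part of order at most $p^m$. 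In other words, the ``unbounded'' part of $\varphi$ (the scalar-like, primitive-root-of-unity part that worries you) necessarily sits inside the Fitting subgroup and never propagates upward; the only part of $\varphi$ visible above $F(\bar G)$ is the part whose order the Hall--Higman estimate $p^m-p^{m-1}\leqslant d$ controls directly. Once this is in hand, Jabara's theorem does apply as a black box to $D:=\bar G/F(\bar G)$ and gives $h(D)\leqslant 7\alpha(|\varphi|_D|)^2\leqslant 7\cdot(4d)^2=112d^2$, and the additive ``$+2$'' comes from $F(\bar G)$ and from $F(G)=\bigcap_q O_{q',q}(G)$; there is no need for the tower $P_1\cdots P_h$ or the layer-by-layer accounting you sketch.

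Your proposed substitute, the ``bookkeeping over the whole chain,'' is stated at a level of generality where one cannot tell whether it could be carried out, and as written it is not a proof: you assert that each layer that raises the Fitting height forces the minimal-polynomial degree to grow, and that the exceptional cases cost only $O(d)$ or $O(d^2)$, but you give no mechanism for either claim, nor any explanation of where the constant $112$ would come from under this scheme. You do correctly isolate the central Hall--Higman estimate $p^k-p^{k-1}\leqslant d$ (this matches Lemma~\ref{l-pm}) and correctly note that primitivity of $f$ guarantees $f\bmod p\neq 0$ so that the minimal polynomial on each elementary abelian section has degree at most $d$; those pieces are right and are exactly what the paper uses. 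What is missing is the observation that the troublesome large-order part of $\varphi$ can be pushed into the Fitting subgroup, which collapses the whole problem to one quotient $D$ where the order of the automorphism is genuinely bounded and Dade--Jabara finishes the job.
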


Examples show that if we drop the condition that $f(x)$ is primitive, then one cannot obtain a bound for the Fitting height of $G$ in terms of $\deg(f(x))$. We conjecture that such a bound could possibly be  obtained for an arbitrary polynomial $f(x) \neq 0$ if ``elementary abelian identity'' is replaced with the stronger  ``ordered identity''.

A stronger bound for the Fitting height is obtained for an arbitrary polynomial $f(x)$  under an additional restriction on the prime divisors of the order of the group. Recall that if $\sigma$ is a set of primes, then a finite group $G$ is a \emph{$\sigma'$-group}  if $|G|$ is not divisible by any prime in $\sigma$.  Let  $\operatorname{irr}(f(x))$ denote the number of different irreducible factors of $f(x)$ in $\mathbb{Z}[x]$ (counted without multiplicities).

\begin{theorem}\label{t1}
	Suppose that a finite (soluble) group $G$ admits a fixed-point-free automorphism satisfying an elementary abelian identity  $f(x)\in \mathbb{Z}[x]$, where $f(x)$ is a non-zero polynomial. There is a finite set of primes $\sigma=\sigma(f(x))$ depending only on $f(x)$ such that if $G$ is a $\sigma'$-group, then the Fitting height of $G$ is at most $2 + \operatorname{irr}(f(x))^2$.
\end{theorem}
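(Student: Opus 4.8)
The plan is to argue by contradiction, taking $G$ to be a counterexample of least order, and to follow the classical Hall--Higman/Shult/Gross/Berger analysis of the Fitting series, but recording at each stage the linear-algebraic data coming from $f(x)$ in place of the multiplicities of the primes dividing $|\varphi|$. First I would fix the set $\sigma=\sigma(f)$: writing $f=c\cdot f_1^{e_1}\cdots f_r^{e_r}$ with $c\in\mathbb{Z}$, the $f_i$ pairwise non-proportional primitive irreducible polynomials and $r=\operatorname{irr}(f)$, I let $\sigma$ consist of all primes dividing $c$, all primes dividing a leading coefficient or a discriminant of some $f_i$, and all primes dividing $\operatorname{Res}(f_i,f_j)$ for $i\ne j$. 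For $p\notin\sigma$ the reductions $\bar f_1,\dots,\bar f_r$ modulo $p$ are then separable, pairwise coprime, and of unchanged degree, so for every $\mathbb{F}_p\langle\varphi\rangle$-module $W$ with $\bar f(\varphi|_W)=0$ there is a $\langle\varphi\rangle$-invariant ``$f$-primary'' decomposition $W=\bigoplus_{i=1}^r W^{(i)}$, where $W^{(i)}$ collects the generalized eigenspaces of $\varphi|_W$ for the roots of $\bar f_i$; all eigenvalues of $\varphi$ on $W^{(i)}$ lie in one orbit of the decomposition group and hence share a single multiplicative order $n_i(p)$. I call $\tau(W)=\{i: W^{(i)}\ne 0\}\subseteq\{1,\dots,r\}$ the \emph{type} of $W$.

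Since $G$ is a $\sigma'$-group, every prime dividing $|G|$ lies outside $\sigma$, so by Remark~\ref{r1} the automorphism $\varphi$ is annihilated by $\bar f$ on every characteristic elementary abelian section of $G$; decomposing $\varphi$ there into its semisimple and unipotent parts, the semisimple part is again fixed-point-free and of order prime to the characteristic, while the unipotent part is non-trivial only when $f$ is not squarefree. Thus, if $f$ is squarefree, $\varphi$ acts coprimely on each $p$-section --- the setting of Berger's theorem and the non-modular Hall--Higman theorems of Shult and Gross --- whereas the non-semisimple contributions in the general case are controlled by the classical (modular) Hall--Higman $p$-length bounds exactly as in the proof of Theorem~\ref{t0}, affecting the final count only by bounded corrections. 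I then carry out the standard reductions for a minimal counterexample (using that the relevant invariants, including fixed-point-freeness, pass to the quotients involved by the effective coprimeness just noted): I may assume $\Phi(G)=1$, that $F(G)=O_p(G)$ is elementary abelian for a single prime $p$, that $O_{p'}(G)=1$, and that $C_G(F(G))=F(G)$, so that $G/F(G)$ acts faithfully on the characteristic module $V_1:=F(G)$, on which $\varphi$ acts fixed-point-freely and annihilated by $\bar f$. These reductions account for the two exceptional Fitting layers.

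It remains to bound $h(G/F_2(G))$, where $F_2(G)$ is the second term of the Fitting series: I must show that along a chain of Fitting layers of a group in the above reduced form the types of suitably chosen chief factors cannot persist too long. Following the method by which $h(G)\le\alpha(|\varphi|)$ is proved, one finds for each level $i$ a chief factor $W_i$ inside $F_i/F_{i-1}$ --- chosen, after further harmless reductions, among characteristic elementary abelian sections --- on which $\varphi$ acts non-trivially, and one applies the non-modular Hall--Higman theorems to $\varphi$ acting on each summand $W_i^{(j)}$. The phenomenon to exploit is the one already visible for $f(x)=x-a$: if over many consecutive levels $\varphi$ acts ``inside a fixed index $j$'' (so that it acts essentially as a scalar over $\mathbb{F}_p[\zeta_j]$), then $\varphi$ centralises, modulo the Fitting series, the groups acting on those layers, and fixed-point-freeness forces those groups to be trivial, which bounds the length of the chain. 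Quantifying this, I expect that each of the $r$ indices can be ``responsible'' for at most $r$ of the remaining Fitting layers, whence $h(G/F_2(G))\le r^2$ and $h(G)\le 2+\operatorname{irr}(f)^2$.

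The main obstacle is exactly this last, identity-aware Hall--Higman step: the quantitative statement that a fixed-point-free automorphism cannot act within a bounded set of types over an unbounded number of Fitting layers, with the sharp dependence giving the factor-by-factor count. In the squarefree case this should follow summand by summand from the non-modular Hall--Higman theorems of Gross and Berger, since there $\varphi$ acts on $W_i^{(j)}$ with order $n_j(p_i)$ prime to the characteristic and one can control how the orders arising at level $i-1$ must differ from those at level $i$. The genuinely delicate points are: (a) arranging, through additional reductions, that the chief factors one works with lie among the \emph{characteristic} elementary abelian sections to which the hypothesis on $f$ applies; (b) absorbing the non-semisimple contributions when $f$ is not squarefree via the classical Hall--Higman $p$-length bounds and the techniques behind Theorem~\ref{t0}, so that they do not inflate the coefficient of $\operatorname{irr}(f)^2$; and (c) the combinatorial bookkeeping that converts ``each irreducible factor is responsible for at most $\operatorname{irr}(f)$ layers'' into the stated bound.
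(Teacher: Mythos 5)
Your proposal follows the broad contour of the proof (reduce to coprime order, invoke non-modular Hall--Higman/Shult--Gross--Berger) but has genuine gaps at the two places the paper works hardest, and it defines $\sigma$ in a way that does not enable the paper's main reduction.

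The crucial step in the actual proof is a reduction of the \emph{identity itself} from $f(x)$ to a product $v(x)=\gcd\bigl(f(x),\,x^{(2d)^{2d}!}-1\bigr)$ of at most $\operatorname{irr}(f)$ cyclotomic polynomials. This is what makes the whole argument semisimple and eigenvalue-theoretic, and it hinges on three ingredients your proposal does not supply. First, by Proposition~\ref{pr-weak}, the order of the automorphism induced on $\bar G/F(\bar G)$ is at most $(2d)^{2d}$, so $u(x)=x^{(2d)^{2d}!}-1$ is also an elementary abelian identity there. Second, the elementary abelian identities form an \emph{ideal} of $\mathbb{Z}[x]$ (Lemma~\ref{l-ideal}), so the resultant identity $\rho\cdot v(x)$, with $\rho=\operatorname{Res}(f/v,\,u/v)$, is again an elementary abelian identity. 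Third, $\sigma(f)$ is defined to contain the primes dividing $(2d)^{2d}!\cdot a\cdot\rho$, so that $\rho$ becomes invertible modulo $|D|$ and $v(x)$ itself is an elementary abelian identity. Your $\sigma$ (content, leading coefficients, discriminants, pairwise resultants of the $f_i$) makes the reductions of the $f_i$ separable and pairwise coprime mod $p$, which gives a nice $f$-primary decomposition of modules, but it cannot convert $f$ into a product of cyclotomics and thus does not give access to orders of roots of unity. Without that, the subsequent eigenvalue-order bookkeeping has no foothold.

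The second gap is the one you yourself flag as ``the main obstacle'': the quantitative step turning ``persistence of types'' into a Fitting-height bound. The paper does not argue layer-by-layer with types at all. Instead it shows two global facts: (i) by replacing $\varphi$ with $\varphi^p$ whenever $\Phi_p$ is absent from $v$, the number of distinct primes dividing $|\varphi|$ can be taken $\leqslant c=\operatorname{irr}(f)$; (ii) for the remaining primes, applying Lemma~\ref{l3}(b) (non-modular, no exceptions because of the coprimeness baked into $\sigma$) to a suitable characteristic elementary abelian section forces the minimal polynomial of the Sylow $p$-part of $\bar\varphi$ to be $x^{p^k}-1$, so the cyclotomic factors $\Phi_{n_i}$ of $v$ must exhibit every power $p^0,\dots,p^k$ as the $p$-part of some $n_i$, giving $k\leqslant c-1$. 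Hence $\alpha(|\bar\varphi|)\leqslant c(c-1)$ and Theorem~\ref{t-sgb} finishes. Your heuristic ``each irreducible factor is responsible for at most $r$ layers'' is not a proof of this, and would need exactly this mechanism to be made precise. Finally, your plan to absorb the non-squarefree case by invoking modular Hall--Higman as in Theorem~\ref{t0} would only give an extra additive correction depending on $\deg f$, not a bound in $\operatorname{irr}(f)$ alone; the paper sidesteps this entirely because, once one passes to $v(x)$, the situation is genuinely coprime and semisimple, and no modular correction enters.
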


The set of primes $\sigma(f(x))$ in this theorem is described explicitly in Definition~\ref{d-invariants}.

Earlier the second author~\cite{moe1} proved that, for ordered identities $f(x)$ with irreducible polynomial $f(x)$, Theorem~\ref{t1} holds with a sharp bound $1$ for the Fitting height  (for a different finite set of primes $\sigma(f(x))$). In~\cite{moe2} the second author proved a result similar to Theorem~\ref{t1} bounding the Fitting height of $G$ by the number of irreducible factors of $f(x)$ counting multiplicities   for a certain class of polynomials $f(x)$. The bound for the Fitting height that we obtain in Theorem~\ref{t1} may not be best-possible, but importantly it only depends on the number of different irreducible factors $\operatorname{irr}(f(x))$ and the theorem holds for any non-zero polynomial $f(x)$. The bound for the Fitting height in Theorem~\ref{t0} is weaker, in terms of $\deg (f(x))$, but the advantage is that it does not impose any restrictions on the prime divisors of $|G|$ provided that the polynomial $f(x)$ is primitive (for example, when $f(x)$ is monic).

Let $G$ be a finite (soluble) group admitting a fixed-point-free automorphism $\varphi$ satisfying an  elementary abelian  identity $f(x)\in \mathbb{Z}[x]$. The first step in the proof of both Theorems~\ref{t0} and~\ref{t1} is to use Hall--Higman type theorems (with certain modifications) for obtaining a reduction to the situation where $\varphi$ has  order bounded in terms of $\operatorname{deg}(f(x))$. Then the proof of Theorem~\ref{t0} follows by an application of a special case of  Dade's theorem~\cite{dad}, or rather Jabara's~\cite{jab} recent improvement for the bound for the Fitting height of a finite group admitting a fixed-point-free automorphism of not necessarily coprime order.

In the proof of Theorem~\ref{t1} it is the Shult--Gross--Berger theorem that is ultimately applied after a reduction to the case of a (possibly different) coprime automorphism  such that the number $\alpha (|\varphi|)$ of prime factors in $|\varphi|$ counting multiplicities is bounded in terms of $\operatorname{irr}(f(x))$.
This becomes possible after defining a `forbidden' finite set of primes $\sigma$ depending only on $f(x)$ such that for $\sigma '$-groups $G$  the automorphism $\varphi$ can be assumed to be of coprime order and `non-exceptional' in the sense of Hall--Higman type theorems, while $f(x)$ can be assumed to be a product of  cyclotomic polynomials. Then Hall--Higman type theorems are applied again to reduce to a situation where $\alpha (|\varphi|)$ is bounded in terms of $\operatorname{irr}(f(x))$ and an application of the Shult--Gross--Berger theorem finishes the proof.

\section{Preliminaries}\label{s-prel}

Suppose that a group $A$ acts by automorphisms on a group $B$. We use the usual notation for commutators $[b,a]=b^{-1}b^a$ and commutator subgroups $[B,A]=\langle [b,a]\mid b\in B,\;a\in A\rangle$, as well as for centralizers $C_B(A)=\{b\in B\mid b^a=b \text{ for all }a\in A\}$
and $C_A(B)=\{a\in A\mid b^a=b\text{ for all }b\in B\}$. In particular, then $A/C_A(B)$ embeds in the automorphism group $\operatorname{Aut}B$. If $\varphi $ is an automorphism of a group $G$ and $S$ is a $\varphi$-invariant section of $G$, then we denote by  $\varphi|_S$ the automorphism induced by $\varphi$ on $S$;  sometimes we denote the induced automorphism by the same letter when this causes no confusion.

We recall the well-known property of coprime actions.

\begin{lemma}\label{l-copr}
	Let $\varphi$ be an automorphism of coprime order of a finite group~$G$. If $N$ is a normal $\varphi$-invariant subgroup, then $C_{G/N}(\varphi)=C_G(\varphi)N/N$. In particular, if $\varphi$ acts trivially on every factor of some subnormal $\varphi$-invariant series of $G$, then $\varphi=1$.
\end{lemma}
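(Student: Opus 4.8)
I would first dispose of the trivial inclusion $C_G(\varphi)N/N\subseteq C_{G/N}(\varphi)$: the image in $G/N$ of a $\varphi$-fixed element is $\varphi$-fixed. The content is the reverse inclusion, and the plan is to reduce to the case where $N$ is abelian and then produce the required fixed point from coprimeness by a norm-operator argument.

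For the reduction I would induct on $|N|$, the case $N=1$ being trivial. Since $G$ is soluble (the only case relevant in this paper), $N$ is soluble, so if $N\ne 1$ is non-abelian its derived subgroup $N'$ satisfies $1\ne N'\lneq N$ and is characteristic in $N$, hence $\varphi$-invariant and normal in $G$. Given $gN\in C_{G/N}(\varphi)$, I would apply the inductive hypothesis first to $G/N'$ with the $\varphi$-invariant normal subgroup $N/N'$ to obtain $x$ with $xN'\in C_{G/N'}(\varphi)$ and $xN=gN$, and then to $G$ with $N'$ to obtain $y\in C_G(\varphi)$ with $yN'=xN'$; then $gN=yN\in C_G(\varphi)N/N$. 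This reduces everything to the case $N$ abelian.

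In the abelian case I would regard $N$ as a module for $\mathbb{Z}[\langle\varphi\rangle]$ (written multiplicatively, using commutativity freely), put $m=|\varphi|$, and introduce the norm operator $\nu=1+\varphi+\cdots+\varphi^{m-1}$ on $N$. Since $(\varphi-1)\nu$ and $\nu(\varphi-1)$ both act as $\varphi^m-1=0$ on $N$, one gets $\nu N\subseteq C_N(\varphi)$ and $(\varphi-1)N\subseteq\ker\nu$. The crucial point — and the only use of coprimeness — is that $\nu$ acts on $C_N(\varphi)$ as the $m$-th power map, an automorphism of $C_N(\varphi)$ because $\gcd(m,|N|)=1$; hence $\nu N=C_N(\varphi)$, and comparing orders yields $(\varphi-1)N=\ker\nu$. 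Now for $gN\in C_{G/N}(\varphi)$ we have $c:=g^{-1}g^{\varphi}\in N$, and an element $gn$ with $n\in N$ is $\varphi$-fixed exactly when $n^{\varphi}n^{-1}=c^{-1}$, i.e.\ exactly when $c^{-1}\in(\varphi-1)N=\ker\nu$; and $\nu(c)=\prod_{i=0}^{m-1}c^{\varphi^i}=\prod_{i=0}^{m-1}(g^{\varphi^i})^{-1}g^{\varphi^{i+1}}$ telescopes to $g^{-1}g^{\varphi^m}=1$, so such an $n$ exists and $gn\in C_G(\varphi)$ represents $gN$.

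For the ``in particular'' clause I would induct on the length $n$ of the given $\varphi$-invariant subnormal series $1=G_0\trianglelefteq\cdots\trianglelefteq G_n=G$: by induction $\varphi$ is trivial on $G_{n-1}$, so $G_{n-1}\le C_G(\varphi)$, while the first part applied with $N=G_{n-1}$ (on which the quotient $G/G_{n-1}$ is $\varphi$-trivial) gives $G=C_G(\varphi)G_{n-1}=C_G(\varphi)$, whence $\varphi=1$. I do not anticipate a real obstacle here; the one point needing genuine extra work would be the non-soluble version of the reduction, when $N$ itself is a non-abelian minimal normal subgroup, but that case does not occur in this paper.
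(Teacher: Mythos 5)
The paper does not prove this lemma---it is stated as a well-known property of coprime actions with no argument supplied---so there is no author proof to compare against; I can only assess your argument on its own terms and against the standard reference proof. Your proof is correct in the case the paper actually needs, namely $G$ (hence $N$) soluble. The abelian-case computation via the norm operator $\nu=1+\varphi+\cdots+\varphi^{m-1}$ is right: $\nu$ restricts to the $m$-th power map on $C_N(\varphi)$, which is bijective by coprimeness, giving $\nu N = C_N(\varphi)$; an order count then forces $\ker\nu = (\varphi-1)N$; and the telescoping $\nu(c) = g^{-1}g^{\varphi^m} = 1$ for $c = g^{-1}g^{\varphi}$ produces exactly the element of $N$ that shifts $g$ into $C_G(\varphi)$. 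The derived-series reduction and the induction on the series length for the final clause (where you correctly use that $G_{n-1}\trianglelefteq G_n = G$, so the first part applies) are also fine.

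Where your route genuinely differs from the standard one: the textbook proof (Gorenstein, Theorem~6.2.2; Isaacs, \emph{Finite Group Theory}, Lemma~3.28) avoids any reduction and any solubility hypothesis on $N$. Given $gN\in C_{G/N}(\varphi)$, one works in the semidirect product and observes that $\langle\varphi\rangle$ and $\langle\varphi\rangle^{g}$ are both complements to $N$ in $N\langle\varphi\rangle$, hence $N$-conjugate by the conjugacy part of Schur--Zassenhaus (applicable because $\langle\varphi\rangle$ is cyclic, hence soluble); if $\langle\varphi\rangle^{g}=\langle\varphi\rangle^{n}$ with $n\in N$, then $[gn^{-1},\varphi]\in N\cap\langle\varphi\rangle=1$, so $gn^{-1}\in C_G(\varphi)$ represents $gN$. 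That argument covers the case your derived-series reduction cannot reach, namely $N$ perfect (for instance a direct product of non-abelian simple groups), which you correctly flag as outside the scope of your proof. Since every application in this paper is to soluble $G$, your proof suffices for the paper's purposes; to get the lemma as literally stated you would need the Schur--Zassenhaus argument or a Glauberman-type fixed-point lemma to handle the characteristically simple non-abelian $N$.
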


It is also known that if $\varphi$ is a coprime automorphism of a finite group $G$, then for every prime $q$ there is a $\varphi$-invariant Sylow $q$-subgroup. It is important for us that a similar property and an analogue of Lemma~\ref{l-copr} hold for fixed-point-free automorphisms of any order, not necessarily coprime to the order of the group. Henceforth we freely use the fact that a finite group with a fixed-point-free automorphism is soluble~\cite{row}.

\begin{lemma}[{\cite[Theorem~10.1.2]{gor}}] \label{l-hall}
	Let $G$ be a finite (soluble) group admitting a fixed-point-free automorphism $\varphi$, and let $\sigma$ be a set of primes. Then $G$ has a $\varphi$-invariant Hall-$\sigma$ subgroup.
\end{lemma}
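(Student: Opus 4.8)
The plan is to obtain Lemma~\ref{l-hall} as a short consequence of P.~Hall's theorem on Hall subgroups of finite soluble groups, with the fixed-point-free hypothesis entering at only one point, through a surjectivity statement for the map $x\mapsto x^{\varphi}x^{-1}$.

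First I would note that $G$ is soluble (as recalled above), so $G$ possesses a Hall $\sigma$-subgroup $H$ and, moreover, any two Hall $\sigma$-subgroups of $G$ are conjugate in $G$. Since $\varphi$ is an automorphism, $H^{\varphi}$ is again a Hall $\sigma$-subgroup, so $H^{\varphi}=H^{c}$ for some $c\in G$.

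Next I would use $C_{G}(\varphi)=1$ to show that the map $\vartheta\colon G\to G$, $\vartheta(x)=x^{\varphi}x^{-1}$, is a bijection. It is injective: if $x^{\varphi}x^{-1}=y^{\varphi}y^{-1}$, then $(y^{-1}x)^{\varphi}=y^{-1}x$, hence $y^{-1}x\in C_{G}(\varphi)=1$ and $x=y$; as $G$ is finite, $\vartheta$ is then onto. Choose $g\in G$ with $\vartheta(g)=c^{-1}$, i.e.\ $g^{\varphi}=c^{-1}g$. Then $(H^{g})^{\varphi}=(H^{\varphi})^{g^{\varphi}}=(H^{c})^{g^{\varphi}}=H^{cg^{\varphi}}=H^{c\cdot c^{-1}g}=H^{g}$, so $H^{g}$ is a $\varphi$-invariant Hall $\sigma$-subgroup of $G$, as desired.

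I do not anticipate a real obstacle beyond keeping the conjugation conventions straight. The only essential ingredient is the surjectivity of $\vartheta$, which is precisely where fixed-point-freeness is needed and why the hypothesis cannot be relaxed to an arbitrary automorphism: for example, the inner automorphism of $C_{7}\rtimes C_{3}$ induced by an element of order $7$ is not fixed-point-free and normalizes no Sylow $3$-subgroup. (Alternatively one could derive a $\varphi$-fixed point on the $G$-conjugacy class of Hall $\sigma$-subgroups from a Glauberman-type lemma, but that route would impose a superfluous coprimeness condition, whereas the argument above makes no assumption on the order of $\varphi$.)
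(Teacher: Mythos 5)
Your proof is correct, and it is exactly the paper's argument, simply written out: the paper's proof cites Gorenstein's Theorem~10.1.2 and says to replace Sylow's theorem by Hall's theorem, and Gorenstein's proof is precisely the bijectivity of $x\mapsto x^{\varphi}x^{-1}$ applied to a conjugating element, as you have it.
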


\begin{proof}
	We only need to replace Sylow's theorem with Hall's theorem in the proof of~\cite[Theorem~10.1.2]{gor} to obtain the lemma for arbitrary sets of primes $\sigma$.
\end{proof}

\begin{lemma}[{\cite[Lemma~10.1.3]{gor}}] \label{l-fpf}
	Let $G$ be a finite group and let $\varphi$ be a fixed-point-free  automorphism of $G$. If $N$ is a normal $\varphi$-invariant subgroup, then  $\varphi$ induces a fixed-point-free   automorphism of the quotient $G/N$.
\end{lemma}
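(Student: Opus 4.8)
The plan is to give a short self-contained argument rather than merely quoting \cite{gor}, since the statement reduces to one clean fact about fixed-point-free maps. Writing $\bar G=G/N$ and letting $\bar\varphi$ be the induced automorphism, the assertion is that $C_{\bar G}(\bar\varphi)=1$; unravelling cosets, this says exactly that $g^{-1}g^{\varphi}\in N$ forces $g\in N$. So the whole proof amounts to proving that implication.

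The key device is the \emph{difference map} $\theta\colon G\to G$, $\theta(g)=g^{-1}g^{\varphi}$. First I would record that $\theta$ is injective: if $\theta(g)=\theta(h)$, then $hg^{-1}=h^{\varphi}(g^{\varphi})^{-1}=(hg^{-1})^{\varphi}$, so $hg^{-1}\in C_G(\varphi)=1$ and $g=h$. Since $G$ is finite, $\theta$ is therefore a bijection of $G$ onto itself — this is the only place fixed-point-freeness enters, and it is what makes the whole argument work without any coprimeness hypothesis (contrast with Lemma~\ref{l-copr}).

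Next, suppose $g\in G$ satisfies $m:=g^{-1}g^{\varphi}\in N$. For any $n\in N$ one computes $\theta(gn)=(gn)^{-1}(gn)^{\varphi}=n^{-1}\,m\,n^{\varphi}$, which lies in $N$ because $N$ is normal (so $m\in N$ is conjugated within $N$) and $\varphi$-invariant (so $n^{\varphi}\in N$). Hence $\theta$ maps the right coset $gN$ into $N$; since $\theta$ is injective and $|gN|=|N|<\infty$, it maps $gN$ \emph{onto} $N$. In particular $1\in\theta(gN)$, so there is $n_0\in N$ with $(gn_0)^{-1}(gn_0)^{\varphi}=1$, i.e.\ $gn_0\in C_G(\varphi)=1$; thus $g=n_0^{-1}\in N$, as required. (Equivalently, one may observe that $\theta^{-1}(N)$ is a union of right cosets of $N$ of total cardinality $|N|$ containing the identity, hence equals $N$, giving the same conclusion.)

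I do not anticipate any real obstacle: the only step deserving care is the identity $\theta(gn)=n^{-1}mn^{\varphi}\in N$, where both the normality and the $\varphi$-invariance of $N$ are genuinely used, together with the finiteness of $G$ to pass from injectivity of $\theta$ to surjectivity onto the coset $gN$. Note also that the solubility of $G$ (guaranteed by Rowley's theorem for groups with a fixed-point-free automorphism) plays no role in this particular lemma; it is a purely elementary counting argument.
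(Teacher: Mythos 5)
Your proof is correct, and it is essentially the standard argument (the one in Gorenstein~\cite[Lemma~10.1.3]{gor}, which the paper cites in lieu of a proof): the ``difference map'' $\theta(g)=g^{-1}g^{\varphi}$ is injective by fixed-point-freeness and hence bijective by finiteness, and this forces the induced map on $G/N$ to have trivial centralizer. Your execution via the coset $gN$ works, though the cleanest form of the same idea is to note that $\theta$ maps $N$ into $N$ (by $\varphi$-invariance) and is therefore a bijection of $N$; so if $\theta(g)=m\in N$, pick $n\in N$ with $\theta(n)=m$ and conclude $g=n\in N$ by injectivity of $\theta$ on all of $G$.

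One small inaccuracy worth flagging: in the step $\theta(gn)=n^{-1}mn^{\varphi}\in N$ you attribute part of the containment to normality of $N$ (``$m\in N$ is conjugated within $N$''). In fact no conjugation occurs there; $n^{-1}mn^{\varphi}$ is simply a product of three elements of $N$ (using only $\varphi$-invariance for $n^{\varphi}$), so closure of the subgroup $N$ suffices. Normality of $N$ is needed only so that $G/N$ is a group and $\bar\varphi$ is defined, not for that computation. This does not affect correctness, but the bookkeeping of which hypothesis does what should be precise.
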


The following result in its most  general form is a special case of Berger's theorem~\cite{ber}. Earlier results under certain restrictions on the primes dividing  $|\varphi|$ were obtained by Shult~\cite{shu} and Gross~\cite{gro}.  Recall that $\alpha(n)$ denotes the number of prime divisors of $n$ counting multiplicities.

\begin{theorem}[Shult--Gross--Berger]
  \label{t-sgb}
If a finite (soluble) group $G$ admits a fixed-point-free automorphism of coprime order $n$, then the Fitting height of $G$ is at most $\alpha (n)$.
\end{theorem}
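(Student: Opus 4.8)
The plan is to prove Theorem~\ref{t-sgb} by induction on $\alpha(n)$, or rather on the pair $(|G|,n)$ in order to leave room for the reductions below. The base cases are immediate: if $\alpha(n)=0$ then $\varphi=1$ and fixed-point-freeness gives $G=1$, so $h(G)=0$; and if $\alpha(n)=1$ then $n$ is prime and $h(G)\le 1$ is exactly Thompson's theorem~\cite{tho59}. So I would fix $\alpha(n)\ge 2$, assume the theorem for all smaller instances, and pass to a counterexample $G$ of least order.

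The next step is to squeeze this counterexample into a rigid shape by the usual minimal-counterexample reductions, all of which use that $\varphi$ remains fixed-point-free on $\varphi$-invariant sections (Lemma~\ref{l-fpf}), so that every proper $\varphi$-invariant section of $G$ already satisfies the theorem. If $G$ had two distinct minimal $\varphi$-invariant normal subgroups $M_1,M_2$, then $h(G/M_i)\le\alpha(n)$ and the embedding $G\hookrightarrow G/M_1\times G/M_2$ would force $h(G)\le\alpha(n)$; hence $G$ has a unique minimal $\varphi$-invariant normal subgroup $V$. Applying the same idea to $G/\Phi(G)$, and using $h(G/\Phi(G))=h(G)$, one gets $\Phi(G)=1$; then $F(G)$ is a self-centralizing direct product of minimal normal subgroups of $G$ on whose set $\langle\varphi\rangle$ acts, and the uniqueness of $V$ forces a single orbit, so $F(G)=V$ is an elementary abelian $q$-group with $C_G(V)=V$. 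Modding out by a proper $\varphi$-invariant submodule of $V$ and using coprimality (Lemma~\ref{l-copr}) together with Schur--Zassenhaus, one may further take $V$ to be an irreducible $\mathbb{F}_q[G\langle\varphi\rangle]$-module; and since $h(G)=1+h(G/V)$ with $h(G/V)\le\alpha(n)$, the counterexample has Fitting height exactly $h:=\alpha(n)+1$. Finally, as $O_q(G)=V$, the preimage in $G$ of $O_q(F_2(G)/V)$ is a normal $q$-subgroup and so lies in $V$; thus the second Fitting layer $F_2(G)/V$ is a non-trivial nilpotent $q'$-group acting faithfully on $V$, with $\varphi$ acting coprimely and without fixed points on $F_2(G)$.

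The heart of the matter --- and the step I expect to be the main obstacle --- is to extract a contradiction from $h=\alpha(n)+1$ by walking up the Fitting series $V=F_1<F_2<\dots<F_h=G$ and charging a \emph{distinct} prime divisor of $n$ to each of the $h$ steps, which would give $\alpha(n)\ge h=\alpha(n)+1$. This is precisely where the non-modular Hall--Higman theorems enter. At the bottom one has an $r$-group $R$ (a Sylow subgroup of $F_2(G)/V$) acting faithfully on the $\mathbb{F}_q$-module $V$ with $\langle\varphi\rangle$ fixed-point-free on $RV$; the theorems of Shult~\cite{shu} and Gross~\cite{gro} then force $V$, or a well-chosen section of it, to contain a free $\mathbb{F}_q[\langle\psi\rangle]$-submodule for a non-trivial power $\psi$ of $\varphi$, which pins a prime factor of $n$ to this layer; replacing $G$ by an appropriate $\varphi$-invariant section that kills this layer but retains Fitting height $h-1$ relative to a fixed-point-free operator whose order has $\alpha$ smaller by one then sets up the induction for the remaining layers. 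The delicate point is the cluster of exceptional configurations --- centred on the prime $2$ and on quaternion, dihedral and extraspecial sections --- in which the Hall--Higman dichotomy fails to hand over a free summand; it is Berger's theorem~\cite{ber} that disposes of these uniformly, whereas Shult and Gross could do so only under extra restrictions on the primes dividing $|\varphi|$. I would therefore organise the exceptional analysis so that in each such case one instead locates a \emph{proper} $\varphi$-invariant subgroup or quotient of $G$ that still has Fitting height $h$, contradicting the minimality of $G$ directly.
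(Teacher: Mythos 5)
The paper does not prove this theorem; it is stated as a known result, a special case of Berger's theorem~\cite{ber}, with the earlier partial results of Shult~\cite{shu} and Gross~\cite{gro} obtained under restrictions on the primes dividing $|\varphi|$. There is thus no proof in the paper to compare against, and your proposal has to stand on its own.

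Your minimal-counterexample reductions are standard and essentially sound, and you correctly identify the non-modular Hall--Higman dichotomy as the engine that should charge a prime divisor of $n$ to each Fitting layer. The genuine gap is the one you yourself flag and then do not close: the exceptional configurations in which $r^t = p^{k_0}-1$ (and the $p=2$, quaternion and extraspecial variants), where the minimal polynomial of $\psi$ on $V$ need not be $x^{p^k}-1$ and no free $\mathbb{F}_q\langle\psi\rangle$-submodule is forced. Your concluding sentence, that you ``would organise the exceptional analysis so that in each such case one instead locates a proper $\varphi$-invariant subgroup or quotient of $G$ that still has Fitting height $h$,'' names the desired outcome without supplying a mechanism; this exceptional analysis is precisely what Shult and Gross could not carry out in full generality, and Berger's unrestricted treatment required substantially heavier machinery than anything in your outline. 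A secondary soft spot is the inductive handover: ``replacing $G$ by an appropriate $\varphi$-invariant section that kills this layer but retains Fitting height $h-1$ relative to a fixed-point-free operator whose order has $\alpha$ smaller by one'' is asserted without construction, and it is not automatic that a proper power of $\varphi$ stays fixed-point-free on a section of the right Fitting height. As written, the proposal is a correct plan of attack that stops short of the theorem's actual difficulty.
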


For fixed-point-free automorphisms of non-coprime order, the first (exponential) bound for the Fitting height was obtained as a special case of a theorem by Dade~\cite{dad}; the quadratic bound was recently obtained by Jabara~\cite[Corollary 1.2]{jab}.

\begin{theorem}[Dade--Jabara]
	\label{t-j}
	If a finite (soluble) group $G$ admits a fixed-point-free automorphism of order $n$, then the Fitting height of $G$ is bounded in terms of $\alpha (n)$ and is at most $7 \alpha (n)^2$.
\end{theorem}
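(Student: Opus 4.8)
The plan is to reduce, one prime at a time, to the coprime situation covered by the Shult--Gross--Berger theorem (Theorem~\ref{t-sgb}), paying a controlled price at each reduction step by means of a Hall--Higman-type estimate.

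Call a prime $p$ \emph{bad} for $(G,\varphi)$ if $p$ divides both $n$ and $|G|$, and I would argue by induction on the number $b$ of bad primes. If $b=0$, then $\varphi$ has order coprime to $|G|$ and Theorem~\ref{t-sgb} gives $h(G)\le\alpha(n)\le 7\,\alpha(n)^{2}$ (the case $G=1$ being trivial). Suppose $b\ge 1$; fix a bad prime $p$ and write $n=p^{a}m$ with $p\nmid m$, so that $a\ge 1$ and $a\le\alpha(n)$. Let $\mu=\varphi^{m}$ be the $p$-part of $\varphi$; then $\mu$ is an automorphism of order $p^{a}$. Put $C=C_{G}(\mu)$. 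Since $\varphi$ centralizes $\mu$, the subgroup $C$ is $\varphi$-invariant; as $\mu$ acts trivially on $C$, the induced automorphism $\varphi|_{C}$ has order dividing $m$, and $C_{C}(\varphi|_{C})\subseteq C_{G}(\varphi)=1$, so $\varphi|_{C}$ is fixed-point-free. Every bad prime of $(C,\varphi|_{C})$ divides $|C|$ (hence $|G|$) and divides $m$ (hence $n$), so it is bad for $(G,\varphi)$ and distinct from $p$; thus $(C,\varphi|_{C})$ has at most $b-1$ bad primes and, since $\varphi|_{C}$ has order dividing $m$, the inductive hypothesis gives $h(C)\le 7\,\alpha(m)^{2}=7\,(\alpha(n)-a)^{2}$.

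The technical heart of the argument is the estimate
\[
 h(G)\ \le\ h\bigl(C_{G}(\mu)\bigr)\;+\;7a^{2},
\]
valid whenever $\mu$ is an automorphism of order $p^{a}$ of a finite soluble group $G$; this is, in effect, a relative version of the statement in the prime-power case. Granting it and combining with the bound for $h(C)$ just obtained, one gets $h(G)\le 7(\alpha(n)-a)^{2}+7a^{2}\le 7\,\alpha(n)^{2}$, since the inequality $(\alpha(n)-a)^{2}+a^{2}\le\alpha(n)^{2}$ is equivalent to $2a^{2}\le 2a\,\alpha(n)$, which holds because $a\le\alpha(n)$; this closes the induction. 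To establish the displayed estimate I would first reduce, in the standard way, to a \emph{reduced} group $G$: its Fitting series $1=F_{0}<F_{1}<\dots<F_{h}=G$ then has each $F_{i}/F_{i-1}$ an elementary abelian $q_{i}$-group on which $F_{i+1}/F_{i}$ acts faithfully, and (after refining to a chief series) each such factor is a faithful module over $\mathbb{F}_{q_{i}}$ for the group generated by the images of $\mu$ and of $F_{i+1}$. I would then go up the series and apply the Hall--Higman theorem on the minimal polynomial of a $p$-element (note $q_{i}\neq p$) to bound from below the degree to which $\mu$ acts without fixed vectors on each layer above $C_{G}(\mu)$; adding these contributions, and treating separately the finitely many Hall--Higman \emph{exceptional} configurations, would bound the number of such layers by a quadratic function of $a$.

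The main obstacle is exactly this last estimate. The substance of Dade's theorem~\cite{dad} and of Jabara's improvement~\cite{jab} lies in the bookkeeping of the Hall--Higman exceptional cases --- controlling how many can occur and how much they accumulate along the Fitting series --- and in arranging the constants so that the cost per prime is at most $7a^{2}$; the base case $a=1$ already relies on Thompson's theorem~\cite{tho59}. By comparison, the reduction to reduced groups, the elementary arithmetic of the induction on $b$, and the appeal to Theorem~\ref{t-sgb} are routine.
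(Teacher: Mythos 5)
The paper does not prove Theorem~\ref{t-j}; it quotes it as known, with the exponential bound attributed to Dade~\cite{dad} and the quadratic improvement to Jabara~\cite[Corollary~1.2]{jab}. There is therefore no proof in the paper to check your sketch against, so I can only assess it on its own terms.

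Your skeleton (peel off one bad prime $p$ at a time by passing to $C_G(\mu)$ with $\mu$ the $p$-part of $\varphi$, pay a cost of $7a^{2}$ per prime, land in the coprime case and invoke Theorem~\ref{t-sgb}) is reasonable, and the arithmetic $(\alpha(n)-a)^{2}+a^{2}\leqslant\alpha(n)^{2}$ is correct. But the load-bearing step, the claim that
\[
h(G)\ \leqslant\ h\bigl(C_{G}(\mu)\bigr)+7a^{2}
\]
for \emph{any} automorphism $\mu$ of order $p^{a}$ of a finite soluble $G$, is asserted rather than established, and it is a genuinely stronger statement than the theorem itself. Centralizer bounds of this shape, such as $h(G)\leqslant h(C_G(A))+2\ell(A)$, are known (Turull, Kurzweil) only under the coprimeness hypothesis $\gcd(|A|,|G|)=1$; here $p$ is by construction a \emph{bad} prime, so $p$ divides $|G|$ and coprimeness fails. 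Once you replace the hypothesis $C_G(\varphi)=1$ by an arbitrary $C_G(\mu)$ you have discarded the very condition the theorem is about, and it is not clear that such a relative bound holds in the non-coprime, non-fixed-point-free setting; Dade's and Jabara's arguments are arranged precisely to avoid needing it. You flag this as ``the main obstacle,'' but it is not merely an obstacle to be filled in later --- the truth of the intermediate statement is itself in doubt.

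There is also a concrete slip in the last paragraph. You propose to go up the Fitting series of a reduced group, apply Hall--Higman to the minimal polynomial of the $p$-element on each elementary abelian $q_{i}$-layer, and parenthetically assert ``note $q_{i}\neq p$.'' That cannot be arranged: since $p$ divides $|G|$, the Fitting series will in general contain $p$-layers. On a $p$-layer you are in the modular situation and must use Hall--Higman Theorem~B (Theorem~\ref{t-hh}), while on the $q_{i}\neq p$ layers you use the non-modular variants (Lemma~\ref{l3} and its relatives); controlling both kinds of layer, and the accumulation of exceptional cases from each along a single Fitting series, is exactly where the substance of Dade's and Jabara's work lies. Writing ``$q_{i}\neq p$'' quietly removes the hard half of the problem.
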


We collect in the next lemma well-known facts about minimal polynomials of linear transformations (which are assumed monic). If $\varphi$ is an automorphism of a vector space $V$ over a field $F$, we regard $V$ as a right $F\langle\varphi\rangle$-module.

\begin{lemma}\label{l1}
Suppose that $\varphi$ is an automorphism of a vector space $V$ over a field~$K$.

\begin{enumerate}

  \item[\rm (a)] If $\varphi$ is regarded as a linear transformation of the vector space $V\otimes _KK_1$ obtained by extending the ground field to $K_1$, then the minimal polynomial remains the same.

  \item[\rm (b)] If $\varphi$ is diagonalizable, then the degree of the minimal polynomial of $\varphi$ is equal to the number of different eigenvalues of $\varphi$.

  \item[\rm (c)] There is a vector $v\in V$ such that the minimal polynomial of $\varphi$ for $v$, that is, the polynomial $g(x)$ of smallest degree such that $vg(\varphi)=0$, is the same as  the minimal polynomial of $\varphi$ for the whole space $V$.

  \item[\rm (d)] The degree of the minimal polynomial of $\varphi$ is equal to the maximum dimension of the subspace spanned by an orbit of a vector under the action of $\langle\varphi\rangle$.

  \item[\rm (e)]  If $V=V_1\oplus\dots\oplus V_k$, where $V_i\varphi=V_{i+1}$ for $i=1,\dots,k-1$ and $V_k\varphi=V_1$, then the degree of the minimal polynomial of $\varphi$ on $V$ is $k\cdot d$, where $d$ is the degree of the minimal polynomial of the restriction $\varphi^k{|_{V_1}}$.

        \item[\rm (f)] The degree of the minimal polynomial of any power $\varphi ^k$ is at most the degree of the minimal polynomial of $\varphi$.
\end{enumerate}
\end{lemma}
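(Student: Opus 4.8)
The plan is to phrase everything in terms of the $K[x]$-module structure on $V$ in which $x$ acts as $\varphi$, and to derive all six items from two elementary observations. First, for a vector $v\in V$ the $\langle\varphi\rangle$-orbit $\{v\varphi^i\}_{i\ge 0}$ spans the cyclic submodule $vK[\varphi]$, whose $K$-dimension equals $\deg m_v$, where $m_v$ is the monic polynomial of least degree annihilating $v$; moreover $m_v$ divides the minimal polynomial $m_\varphi$ of $\varphi$ on $V$. Second, since $\dim V<\infty$, the polynomial $m_\varphi$ is the least common multiple of the $m_v$ as $v$ ranges over a basis of $V$ (one inclusion is linearity, the other is $m_v\mid m_\varphi$). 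With these in hand most parts are immediate.

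For part~(a) I would note that the subalgebra $K[\varphi]\subseteq\operatorname{End}_K V$ is spanned by $I,\varphi,\dots,\varphi^{e-1}$ with $e=\deg m_\varphi$, and this spanning set stays $K_1$-linearly independent after extending scalars, so $K_1[\varphi]$ again has dimension $e$ and its (monic) minimal polynomial, being a monic divisor of $m_\varphi$ of degree $e$, equals $m_\varphi$. Part~(b) follows because a diagonalizable $\varphi$ has $V$ equal to the sum of its eigenspaces, so $\prod_\lambda(x-\lambda)$ over the distinct eigenvalues annihilates $\varphi$ while no proper divisor does. Part~(f) is just $\deg m_{\varphi^k}=\dim_K K[\varphi^k]\le\dim_K K[\varphi]=\deg m_\varphi$, since $K[\varphi^k]$ is a subalgebra of $K[\varphi]$. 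For part~(c) it suffices, by the second observation, to show that for any two vectors there is a third whose minimal polynomial is the least common multiple of theirs; this is the classical argument reducing to the coprime case (where $m_{v+w}=m_vm_w$) and splitting off prime-power parts, after which one iterates over a basis. Part~(d) then follows: the span of the orbit of $v$ has dimension $\deg m_v\le\deg m_\varphi$, with equality attained by the vector produced in~(c).

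Part~(e) is the one that needs genuine work. Writing $d=\deg m_{\varphi^k|_{V_1}}$, for the upper bound I would note that $\varphi^k$ preserves each $V_i$ and that conjugation by the isomorphism $\varphi^{i-1}\colon V_1\to V_i$ carries $\varphi^k|_{V_1}$ to $\varphi^k|_{V_i}$, so all these restrictions share one minimal polynomial $h$ of degree $d$; hence $h(\varphi^k)=0$ on $V$, i.e. $h(x^k)$ annihilates $\varphi$, giving $m_\varphi\mid h(x^k)$ and $\deg m_\varphi\le kd$. For the lower bound I would pick, using part~(d) applied to $\varphi^k$ acting on $V_1$, a vector $v\in V_1$ with $\dim vK[\varphi^k]=d$; then $vK[\varphi]=\bigoplus_{j=0}^{k-1}\bigl(vK[\varphi^k]\bigr)\varphi^j$ with the $j$-th summand lying in $V_{j+1}$, so $\dim vK[\varphi]=kd$ and therefore $\deg m_\varphi\ge\deg m_v=kd$. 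Combining the two bounds gives $\deg m_\varphi=kd$.

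The routine items (a), (b), (d), (f) reduce instantly to the two observations, so the only real obstacles are the single-vector realization of the least common multiple in~(c) — a classical but slightly delicate fact — and the block bookkeeping in~(e); both are standard linear algebra and I do not expect surprises.
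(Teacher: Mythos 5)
Your proof is correct and follows the same standard linear-algebra route the paper has in mind; where the paper simply cites Hoffman--Kunze for (a) and (c) and declares (d)--(f) consequences of (c)/(d), you supply the actual arguments, all of which check out (including the direct-sum decomposition $vK[\varphi]=\bigoplus_{j=0}^{k-1}(vK[\varphi^k])\varphi^j$ in part (e), which is the key point). No gaps.
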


\begin{proof} All these properties are well-known, but we still indicate some references.

  (a)  See~\cite[Ch.~6, page~192]{hof-kun}.

  (b) This is well known.

  (c) See~\cite[Ch.~7, Corollary of Theorem~3, page~237]{hof-kun}.

  (d) This follows from (c).

  (e) This follows from (d).

  (f) This follows from (d).
\end{proof}

Recalling Definition~\ref{d1} we can say that a linear transformation $\varphi$ of a vector space satisfies an identity $f(x)\in \mathbb{Z}[x]$ if $\varphi$ is annihilated by $f(x)$ (reduced in the ground field).

\begin{lemma}\label{l-power}
	Let $\varphi$ be an automorphism of finite order $|\varphi |$ of a vector space $V$ over a field of characteristic $q$ coprime to $|\varphi|$. Suppose that $\varphi $ satisfies the identity $f(x)=\prod _{i=1}^{c}\Phi_{n_i}(x)$, where the $\Phi_{n_i}(x)$ are some cyclotomic polynomials such that $\gcd(q,n_i) = 1$.
	\begin{itemize}
		\item[\rm (a)] For any $s\in \mathbb{N}$ the power $\varphi^s$ satisfies the identity $\prod _{i=1}^{c}\Phi_{{n_i}/{\operatorname{gcd}(n_i,s)}}(x)$.
		\item[\rm (b)] If $|\varphi|$ is divisible by a power of a prime $p^k$, then one of the integers $ n_i$ is divisible by $p^k$.
	\end{itemize}
\end{lemma}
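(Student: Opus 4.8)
The plan is to reduce everything to the behaviour of roots after passing to an algebraic closure, using Lemma~\ref{l1}(a) to justify that the minimal polynomial (hence the set of identities) does not change under field extension. Let $\overline{F}$ be an algebraic closure of the ground field $F$ of characteristic $q$, and view $\varphi$ as acting on $V \otimes_F \overline{F}$. Since $|\varphi|$ is coprime to $q$, the transformation $\varphi$ is diagonalizable over $\overline{F}$, and its eigenvalues are roots of unity whose orders are coprime to $q$. By Lemma~\ref{l1}(b), the minimal polynomial $m(x)$ of $\varphi$ is the product of the distinct linear factors $(x-\lambda)$ over the eigenvalues $\lambda$. The hypothesis that $\varphi$ is annihilated by $f(x) = \prod_{i=1}^c \Phi_{n_i}(x)$ means $m(x) \mid f(x)$, so every eigenvalue $\lambda$ of $\varphi$ is a root of some $\Phi_{n_i}(x)$, i.e.\ is a primitive $n_i$-th root of unity for some $i$. (Here $\Phi_{n_i}(x) \bmod q$ still factors into distinct linear factors over $\overline{F}$ precisely because $\gcd(q,n_i)=1$, so ``root of $\Phi_{n_i}$'' unambiguously means ``primitive $n_i$-th root of unity''.)

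For part (a): the eigenvalues of $\varphi^s$ are exactly the $s$-th powers $\lambda^s$ of the eigenvalues $\lambda$ of $\varphi$. If $\lambda$ is a primitive $n_i$-th root of unity, then a standard computation shows $\lambda^s$ is a primitive root of unity of order $n_i/\gcd(n_i,s)$, hence a root of $\Phi_{n_i/\gcd(n_i,s)}(x)$. Therefore every eigenvalue of $\varphi^s$ is a root of $\prod_{i=1}^c \Phi_{n_i/\gcd(n_i,s)}(x)$, and since $\varphi^s$ is still diagonalizable (its order divides $|\varphi|$, still coprime to $q$), its minimal polynomial divides this product by Lemma~\ref{l1}(b). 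Thus $\varphi^s$ satisfies the stated identity. One should note that each $n_i/\gcd(n_i,s)$ is again coprime to $q$, so the cyclotomic polynomials appearing make sense modulo $q$ in the same way.

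For part (b): suppose $p^k \mid |\varphi|$. The order $|\varphi|$ is the least common multiple of the orders of the eigenvalues of $\varphi$ (this is where diagonalizability over $\overline{F}$ is used: $\varphi$ has finite order equal to $\mathrm{lcm}$ of eigenvalue orders). Each eigenvalue has order $n_i$ for some $i$, so $|\varphi|$ divides $\mathrm{lcm}(n_1,\ldots,n_c)$; conversely each $n_i$ occurs as the order of some eigenvalue only if $\Phi_{n_i}$ genuinely contributes to $m(x)$, but in any case $|\varphi| \mid \mathrm{lcm}(n_1,\ldots,n_c)$ suffices. Since $p^k$ divides this lcm, the exact power of $p$ dividing the lcm is $\max_i v_p(n_i) \ge k$, so some $n_i$ is divisible by $p^k$.

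The main obstacle — really the only subtle point — is making sure that reduction modulo $q$ does not collapse distinct roots of unity of orders coprime to $q$. This is exactly guaranteed by the hypotheses $\gcd(q,n_i) = 1$: the polynomial $x^{n} - 1$ is separable modulo $q$ when $\gcd(q,n)=1$, so the image of a primitive $n$-th root of unity in $\overline{F}$ still has multiplicative order exactly $n$, and $\Phi_n(x) \bmod q$ still has as roots precisely the elements of order $n$. Once this is pinned down, parts (a) and (b) are elementary facts about orders of roots of unity combined with Lemma~\ref{l1}(a),(b).
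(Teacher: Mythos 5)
Your proof is correct and follows essentially the same route as the paper: extend to the algebraic closure, use diagonalizability (coprime order), and argue via eigenvalues being primitive $n_i$-th roots of unity; part (a) via $s$-th powers of roots and part (b) via the $p$-part of the lcm of eigenvalue orders. Your extra care about the separability of $\Phi_{n_i}(x) \bmod q$ is a welcome clarification but does not change the underlying argument.
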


\begin{proof}
	Let $\widetilde V$ be the vector space obtained from $V$ by extension of the ground field to an algebraically closed field $K$, so that $\varphi$ naturally becomes a $K$-linear transformation of $\widetilde V$. Since the order of $\varphi$ is coprime to the characteristic, the transformation $\varphi$ of finite order is diagonalizable over $K$.  The eigenvalues of $\varphi$ are roots of $f(x)$ (regarded as a polynomial in $K[x])$.
	
	(a) The eigenvalues of $\varphi^s$ are the $s$-th powers of the roots of $\varphi$, and therefore they are roots of $\prod _{i=1}^{c}\Phi_{{n_i}/{\operatorname{gcd}(n_i,s)}}(x)$. Hence $\varphi^s$ satisfies the polynomial $\prod _{i=1}^{c}\Phi_{{n_i}/{\operatorname{gcd}(n_i,s)}}(x)$ on $\widetilde V$ and therefore also on $V$.
	
	(b) If $|\varphi|$ is divisible by a power of a prime $p^k$, then at least one of the eigenvalues of $\varphi$ has multiplicative order divisible by $p^k$ and therefore can be a root of a cyclotomic polynomial $\Phi_{n_i}(x)$ only if $n_i$ is divisible by $p^k$.
\end{proof}

\begin{lemma} \label{l-ideal}
	Let $\varphi$ be an automorphism of a group $G$. Then the elementary abelian identities of $\varphi$ form an ideal of $\mathbb{Z}[x]$.
\end{lemma}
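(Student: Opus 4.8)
The plan is to realise the set of elementary abelian identities of $\varphi$ as an intersection of kernels of ring homomorphisms, and then to invoke the elementary facts that kernels of ring homomorphisms are ideals and that an arbitrary intersection of ideals is an ideal.

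First I would fix a characteristic elementary abelian section $S=H/K$ of $G$ (so that $K\le H$ are characteristic subgroups of $G$), which is an elementary abelian $p$-group for some prime $p$, and write $\varphi_S$ for the automorphism induced by $\varphi$ on $S$, regarded as an $\mathbb F_p$-linear transformation of the $\mathbb F_p$-vector space $S$. Since $S$ has prime exponent $p$, the $\mathbb Z$-algebra structure on $\operatorname{End}_{\mathbb F_p}(S)$ factors through $\mathbb F_p$, so evaluation at $\varphi_S$ defines a ring homomorphism $\rho_S\colon\mathbb Z[x]\to\operatorname{End}_{\mathbb F_p}(S)$, $f(x)\mapsto f(\varphi_S)$ — equivalently, $\rho_S$ is the composite of the natural reduction $\mathbb Z[x]\to\mathbb F_p[x]$ with the evaluation homomorphism $\mathbb F_p[x]\to\operatorname{End}_{\mathbb F_p}(S)$, $x\mapsto\varphi_S$, the latter existing by the universal property of the polynomial ring. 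Hence $I_S:=\ker\rho_S$ is an ideal of $\mathbb Z[x]$.

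Next I would identify $I_S$ with the set of polynomials that are identities of $\varphi_S$ in the sense of Definition~\ref{d1}. Writing $S$ additively and $f(x)=a_0+a_1x+\cdots+a_dx^d$, the condition $g^{a_0}(g^{\varphi})^{a_1}\cdots(g^{\varphi^d})^{a_d}=1$ for all $g\in S$ is exactly $(a_0\,\mathrm{id}+a_1\varphi_S+\cdots+a_d\varphi_S^d)(g)=0$ for all $g\in S$, i.e. $f(\varphi_S)=0$; since $S$ is abelian the order of the factors plays no role, so this is unambiguous. This is precisely the content of Remark~\ref{r1}. Therefore, by Definition~\ref{d2}, a polynomial $f(x)$ is an elementary abelian identity of $\varphi$ if and only if $f\in I_S$ for every characteristic elementary abelian section $S$ of $G$; that is, the set of elementary abelian identities of $\varphi$ equals $\bigcap_S I_S$. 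An intersection of ideals of $\mathbb Z[x]$ is again an ideal, which completes the proof.

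There is no substantial obstacle here: the argument is purely formal once the definitions are unwound. The only points that require a little care are the verification that each $\rho_S$ is genuinely a ring homomorphism (for which one uses that $S$ has prime exponent, so that the coefficients may be reduced modulo $p$) and the observation that ``characteristic elementary abelian section'' is exactly the notion needed for $\varphi$ to induce a linear transformation $\varphi_S$ — both already implicit in Definition~\ref{d2} and Remark~\ref{r1}.
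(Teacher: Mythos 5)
Your proof is correct and rests on the same observation as the paper's: an elementary abelian identity is precisely a polynomial that annihilates the induced linear transformation $\varphi_S$ on every characteristic elementary abelian section $S$. The paper simply verifies the ideal axioms directly (closure under sums, and under multiplication by arbitrary $h(x)\in\mathbb Z[x]$, checked section by section), whereas you repackage the same fact structurally as an intersection $\bigcap_S\ker\rho_S$ of kernels of evaluation homomorphisms $\rho_S\colon\mathbb Z[x]\to\operatorname{End}_{\mathbb F_p}(S)$ --- an equivalent and slightly more conceptual phrasing of the same argument.
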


\begin{proof}
	Let $f(x), g(x) \in \mathbb{Z}[x]$ be elementary abelian identities of $\varphi$, and let $h(x)\in \mathbb{Z}[x]$.
Let $S$ be a characteristic elementary abelian section of $G$ regarded as a right $\mathbb{Z}\langle\varphi\rangle$-module. Then by definition $x(f(\varphi )+g(\varphi))=0+0=0$ and  $x(f(\varphi )h(\varphi)) = 0h(\varphi)=0$  for all $x \in S$. Thus, both  $f(x)+ g(x)$ and $f(x)h(x)$ are  elementary abelian identities of $\varphi$.
\end{proof}

\begin{remark}
	In what follows, we will consider polynomials in the ring $\mathbb{Z}[x]$ and also polynomials with coefficients in a field of prime characteristic. Unless specifically stated otherwise, every polynomial is assumed to belong to $\mathbb{Z}[x]$. Moreover: divisors, greatest common divisors, decompositions into irreducible factors, etc. are to be understood in the ring $\mathbb{Z}[x]$, unless stated otherwise.
\end{remark}

The prime number theorem, commonly attributed to Hadamard and de la Vallée Poussin, establishes that the prime counting function $\pi(x)$ satisfies $\pi(x) \sim x / \ln(x)$,  that is,  $\lim_{x \rightarrow \infty} \pi(x) / (x / \ln(x)) = 1$. We will use the following estimate due to Rosser and Schoenfeld~\cite[Corollary 1]{ros-sch} that is valid for \emph{all} $x > 1$.

\begin{theorem} \label{tpnt}
	For all $x> 1$, we have $\pi(x) < 1.25506 \cdot x / \ln(x)$.
\end{theorem}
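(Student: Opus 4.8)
The plan is to prove the inequality $\pi(x) < 1.25506\cdot x/\ln x$ for all $x>1$ --- equivalently, $\pi(x)\ln(x)/x < 1.25506$ for all $x>1$ --- by splitting the range of $x$ into a bounded initial interval, handled by direct computation, and an unbounded tail, handled by an effective form of the prime number theorem. For the initial interval, note that $\pi$ is a non-decreasing step function and $\ln(t)/t$ is decreasing for $t>e$, so on each interval $[p,q)$ between consecutive primes $p<q$ with $p\ge 3$ the ratio $\pi(x)\ln(x)/x$ attains its maximum at the left endpoint $x=p$, while on $(1,e)$ the ratio is at most $1/e$. Hence over any finite range it suffices to evaluate $\pi(p)\ln(p)/p$ at the primes $p$ in that range. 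A short computation with a table of primes shows this ratio is largest at $p=113$, where $\pi(113)=30$ and $30\ln(113)/113 = 1.2550\ldots < 1.25506$; indeed $113$ is the global maximiser, which is precisely why $1.25506$ appears as the constant.

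For the tail, fix an explicit threshold $x_1$ and prove $\pi(x)\ln(x)/x < 1.25506$ for $x\ge x_1$ using the prime number theorem in quantitative form, with a remainder comfortably below $0.25$ throughout this range. First I would derive an explicit upper bound $\psi(x)\le x\bigl(1+\varepsilon_0(x)\bigr)$ for Chebyshev's function $\psi(x)=\sum_{p^k\le x}\ln p$, with $\varepsilon_0(x)\to 0$ effectively. This follows from the truncated explicit formula, which expresses $\psi(x)-x$ as minus a sum of $x^\rho/\rho$ over the non-trivial zeros $\rho$ of $\zeta(s)$ of height at most $T$, plus an error term; the sum over low zeros is controlled using the numerical verification that those zeros lie on the critical line, and the contribution of the remaining zeros is estimated via a classical zero-free region for $\zeta(s)$ with explicit numerical constants. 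Passing to $\theta(x)=\sum_{p\le x}\ln p$ through the identity $\theta(x)=\psi(x)-\psi(x^{1/2})-\psi(x^{1/3})-\cdots$, whose correction terms are $O(\sqrt{x}\,\ln x)$, yields an explicit bound $\theta(x)\le x\bigl(1+\varepsilon_1(x)\bigr)$.

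The last step is to transfer the estimate from $\theta$ to $\pi$ by Abel summation,
$$\pi(x)=\frac{\theta(x)}{\ln x}+\int_{2}^{x}\frac{\theta(t)}{t\,\ln^{2}t}\,dt,$$
substituting the bound for $\theta$ to obtain $\pi(x) < \frac{x}{\ln x}\bigl(1+\varepsilon_1(x)+\tfrac{1}{\ln x}+(\text{lower-order terms})\bigr)$. Choosing $x_1$ large enough that the bracketed factor is below $1.25506$ settles the tail, and the finitely many primes below $x_1$ are dealt with by the computation of the first paragraph, so the two ranges together cover all $x>1$.

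The main obstacle is the effective analytic input of the second paragraph: one must produce genuine numerical constants in the zero-free region and in the count of zeros on the critical line, and combine them so that $\varepsilon_1(x)$ is provably small for every $x$ above a threshold $x_1$ that is nevertheless small enough for the complementary finite range to be checked by computation. Balancing these two opposing demands is exactly the technical heart of the Rosser--Schoenfeld argument; merely invoking the asymptotic statement $\pi(x)\sim x/\ln x$ does not suffice, since it provides no explicit $x_1$.
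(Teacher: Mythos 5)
The paper does not prove this statement; it quotes it directly as Corollary 1 of Rosser and Schoenfeld~\cite{ros-sch} and merely cites that reference, so there is no internal proof against which to compare yours. Read as an account of the Rosser--Schoenfeld argument, your outline is accurate: the reduction of the finite range to evaluating $\pi(p)\ln(p)/p$ at primes $p$ is sound (since $\ln t/t$ decreases for $t>e$ and $\pi$ is a step function with jumps only at primes), the identification of $x=113$ as the global maximiser is correct ($\pi(113)=30$ and $30\ln(113)/113\approx 1.255053$, which is why $1.25506$ is the chosen constant), the route through an effective upper bound for $\theta(x)$ via the truncated explicit formula combined with a numerically verified initial segment of zeros and an explicit zero-free region is the actual strategy of~\cite{ros-sch}, and the Abel-summation transfer $\pi(x)=\theta(x)/\ln x+\int_2^x \theta(t)/(t\ln^2 t)\,dt$ is correct. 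You also rightly observe that the plain asymptotic $\pi(x)\sim x/\ln x$ is useless here because it supplies no explicit threshold $x_1$. What you have, however, is a roadmap rather than a proof: every analytic input you name --- the explicit constants in the zero-free region, the height to which zeros are verified, the truncation error in the explicit formula, and the resulting numerical $x_1$ --- represents substantial work that you defer entirely to Rosser--Schoenfeld. For the purposes of this paper that deference is appropriate (the authors do the same), but you should present the result as a citation rather than as something you are proving, since as written the second and third paragraphs assert no verifiable bounds.
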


\section{Hall--Higman type theorems}\label{s-hh}

In this section we lay out the foundations of further proofs by producing Hall--Higman type theorems, both of `modular' and `non-modular' kind.  First we state the celebrated Hall--Higman Theorem~B, which is of `modular' kind, dealing with the minimal polynomial of an element of order $p^m$ in a linear group over a field of characteristic~$p$.

\begin{theorem}[Hall and Higman {\cite[Theorem~B]{ha-hi}}] \label{t-hh}
Let $H$ be a $p$-soluble linear group over a field of characteristic $p$, with no normal $p$-subgroup greater than~$1$. If $g$ is an element of order $p^m$ in $H$,
then the minimal polynomial of $g$ is $(x-1)^d = 0$, where $d = p^m$, unless there is
an integer $m_0 \leqslant m$ such that $p^{m_0}-1$ is a power of a prime $q$ for
which a Sylow $q$-subgroup of $H$ is non-abelian, in which case, if $m_0$ is the least such integer, $p^{m-m_0}(p^{m_0}-1)\leqslant d\leqslant p^m$.
\end{theorem}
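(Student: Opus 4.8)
This is the celebrated Theorem~B of Hall and Higman; in the present article it is invoked verbatim from~\cite{ha-hi}, so I only sketch the shape of their argument. Write $V$ for the module over $F$, so that $H\le\operatorname{GL}(V)$. The plan is to argue by induction on $|H|+\dim V$, reducing the pair $(H,V)$ to a single explicit configuration in which the minimal polynomial of $g$ can be read off directly. First come the standard normalisations: by Lemma~\ref{l1}(a) one may extend $F$, so assume it algebraically closed; since $g$ is a $p$-element and $\operatorname{char}F=p$, it is unipotent on every section, so its minimal polynomial is automatically $(x-1)^{d}$ with $d\le p^{m}$ (because $g^{p^{m}}=1$ and $(x-1)^{p^{m}}=x^{p^{m}}-1$), and only a \emph{lower} bound for $d$ is at stake; and as the degree of the minimal polynomial on a module dominates that on any section (Lemma~\ref{l1}(d)), it is enough to produce one irreducible $FH$-section realising the asserted bound, so we may take $V$ irreducible and $H$ faithful on it. Then $p$-solubility together with $O_{p}(H)=1$ gives, via Hall and Higman's Lemma~1.2.3, that $Q:=O_{p'}(H)$ satisfies $C_{H}(Q)=Z(Q)$; in particular $[Q,g]\neq1$, for otherwise $g$ would lie in the $p'$-group $Z(Q)$ and be trivial.

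Next come the Clifford-theoretic reductions relative to $Q\trianglelefteq H$. Let $\langle g\rangle$ permute the homogeneous components of $V|_{Q}$. If some $\langle g\rangle$-orbit has length $p^{k}>1$, then on the sum of the components in that orbit Lemma~\ref{l1}(e) expresses the minimal-polynomial degree as $p^{k}d'$, where $d'$ is the degree for $g^{p^{k}}$ acting on a single component inside the \emph{proper} stabiliser subgroup; the inductive hypothesis applied to $g^{p^{k}}$, together with a routine check that the exceptional prime and exponent transfer correctly, disposes of this case. Hence $V|_{Q}$ is homogeneous, say $V|_{Q}=e\,W$ with $W$ faithful irreducible for $Q$; writing $V\cong W\otimes U$ for the multiplicity module and using that the $p$-element $g$ fixes a line in $U$, one sees that $W$ itself occurs as an $E\langle g\rangle$-section of $V$, so it suffices to treat $V=W$. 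Finally, a nilpotent group with a faithful irreducible module over an algebraically closed field has cyclic centre and is of symplectic type, so after discarding the cyclic central factors (each contributing only a trivial one-dimensional tensor factor for $g$) we reach the configuration $Q=E$ extraspecial of order $q^{1+2n}$ for some prime $q\neq p$ (forced, since $E\le O_{p'}(H)$), with $W$ the unique faithful irreducible $FE$-module and $\dim W=q^{n}$. In particular a Sylow $q$-subgroup of $H$ is nonabelian.

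It remains to analyse $H_{0}:=E\langle g\rangle$ acting faithfully on $W$, where $g$ acts on $E/Z(E)\cong\mathbb{F}_{q}^{2n}$ as a symplectic transformation $\bar g$ of $p$-power order. The decisive lemma is that if $\bar g$ has no nonzero fixed vector, then the minimal polynomial of $g$ on $W$ is exactly $(x-1)^{p^{m}}$; this is proved by exhibiting a vector whose $\langle g\rangle$-orbit spans a $p^{m}$-dimensional subspace of $W$, using the monomial description of $W$ and the freeness of $\bar g$. If instead $\bar g$ fixes a nonzero vector, then $E$ splits $g$-invariantly as a central product of a smaller extraspecial group still acted upon by $g$ and a complementary part contributing a trivial tensor factor, and one induces on $n$. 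The exceptional case is exactly the obstruction to carrying this descent all the way down: the smallest faithful base configuration has $g^{p^{m-m_{0}}}$ of order $p^{m_{0}}$ acting on an extraspecial $q$-group with $\dim W=p^{m_{0}}-1$, and since $\dim W=q^{n}$ is always a prime power this forces $p^{m_{0}}-1=q^{n_{0}}$ to be a power of the prime $q$ (whose Sylow subgroup is the nonabelian extraspecial one); in that base case $g^{p^{m-m_{0}}}$ is a single Jordan block of size $p^{m_{0}}-1$, so $d\le p^{m_{0}}-1<p^{m_{0}}$, and combining this block with the contribution $p^{m-m_{0}}$ coming from the part of $E$ on which the quotient $\langle g\rangle/\langle g^{p^{m-m_{0}}}\rangle$ acts freely yields the lower bound $p^{m-m_{0}}(p^{m_{0}}-1)\le d$ with $m_{0}$ minimal.

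The main obstacle is this last step. Controlling how the minimal polynomial of $g$ propagates through the tensor decomposition of $W$ is delicate, since the sizes of unipotent Jordan blocks combine in a subtle way in characteristic $p$; and isolating precisely the arithmetic condition --- $p^{m_{0}}-1$ a prime power --- under which the fixed-point-free symplectic action of the relevant cyclic $p$-group simply fails to exist is exactly the point at which Hall and Higman's careful study of the module $W$ for $E\langle g\rangle$, and of the interplay between the action of $g$ on $W$ and on $E$ itself, does the essential work.
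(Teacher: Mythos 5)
The paper does not prove Theorem~\ref{t-hh} at all: it is quoted verbatim from Hall and Higman and used as a black box, so there is no internal argument of the paper to compare yours against. Your sketch does trace the broad architecture of the original proof faithfully: extend scalars and pass to a faithful irreducible module; invoke $C_H(O_{p'}(H))\leqslant O_{p'}(H)$ (Hall--Higman's Lemma~1.2.3) for $p$-soluble $H$ with $O_p(H)=1$; apply Clifford theory relative to $Q=O_{p'}(H)$ and descend, via homogeneity and the tensor decomposition, to an extraspecial $q$-group $E$ acted on by the $p$-element $g$; and finish by a dimension count on the faithful irreducible $FE\langle g\rangle$-module $W$.

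One step, however, is misstated. You write that ``the decisive lemma is that if $\bar g$ has no nonzero fixed vector, then the minimal polynomial of $g$ on $W$ is exactly $(x-1)^{p^m}$,'' and you present the exceptional case as an obstruction to the descent that one performs when $\bar g$ \emph{does} fix a vector. In fact the dichotomy occurs precisely in the fixed-point-free base case: when $g$ of order $p^{m_0}$ acts regularly on $E/Z(E)\setminus\{1\}$ and $\dim W = q^n$, the conclusion is that the minimal-polynomial degree is $p^{m_0}$ \emph{unless} $q^n = p^{m_0}-1$, in which case it drops to $p^{m_0}-1$. (Compare Lemma~\ref{l2m}(b) of this paper, the non-modular twin of that step, where the hypothesis is regular action and the conclusion is ``$x^{p^k}-1$ or $r^t = p^k-1$.'') You later locate the exception in the ``smallest faithful base configuration,'' which is the right place, so your exposition self-corrects; but the initial statement of the lemma needs the exceptional alternative built in, otherwise a reader would conclude (falsely) that regular action of $\bar g$ always forces degree $p^m$. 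Incidentally, the route you suggest for the non-exceptional bound --- exhibiting a vector whose $\langle g\rangle$-orbit spans a $p^m$-dimensional subspace, cf.\ Lemma~\ref{l1}(d) --- is one standard device; the paper's Lemma~\ref{l2m} instead counts eigenvalues by computing $\dim C_E(\eta)$ inside the enveloping (full matrix) algebra, which gives the same information through a different computation. With the dichotomy repaired, your account is a reasonable r\'esum\'e of the cited proof, while the paper itself simply invokes Theorem~B without reproving it.
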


Note that the degree $d$ of the minimal polynomial of $g$ satisfies $d\geqslant p^{m}-p^{m-1}$ in all cases.

We now consider Hall--Higman type results in so-called `non-modular' situations, which analyse the minimal polynomial of an element of order $p^m$ in a linear group over a field of characteristic not equal to~$p$.

\begin{lemma}
  \label{l2m}
  Suppose that $L$ is an extra-special $r$-group of
  order $r^{2t+ 1}$, and $\langle\eta\rangle$ is a cyclic group of order $p^k$ for a prime $p\ne r$  acting (not necessarily faithfully) by automorphisms on $L$ such that the induced automorphism $\bar \eta$ of $L$ has order $p^m$, acts regularly on the set $L/Z(L)\setminus\{1\}$ with all orbits of length $p^m$, and centralizes $Z(L)$. Suppose that the semidirect product $L\langle\eta\rangle$ acts by linear transformations on a vector space $V$ over an algebraically closed field $K$ whose characteristic does not divide $p\cdot r$, and suppose  that $V$ is an irreducible $KL\langle\eta\rangle$-module and a faithful and homogeneous $KL$-module.
  \begin{itemize}
    \item[ {\rm (a)}] Then
  $\eta$  as a linear transformation of $V$ has at least $p^m-1$ different eigenvalues, so that the minimal polynomial of $\eta$ on $V$ has degree at least $p^m-1$.

	\item[ {\rm (b)}] If in addition $k=m$ (that is, $\eta$ acts faithfully on $L$), then either the minimal polynomial of $\eta$ on $V$ is $x^{p^k}-1$ or $r^t = p^k-1$ and if $p = 2$, then $t = 1$.
  \end{itemize}
  \end{lemma}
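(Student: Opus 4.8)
The plan is to first reduce to the case that $V$ equals the faithful irreducible $KL$-module, then read off an exact ``autocorrelation'' identity for the eigenvalue multiplicities of $\eta$ on $V$ from the $KL$-decomposition of $\operatorname{End}_K(V)$, and finally solve the resulting elementary Diophantine problem. For the reduction: since $V$ is a faithful homogeneous $KL$-module and $\operatorname{char}K\nmid r$, it is $W$-isotypic, where $W$ is the faithful irreducible $KL$-module with the same central character as $V$, of dimension $r^{t}$; write $V\cong W\otimes_K U$ with $L$ acting on $W$ only, via $\rho$ say. Because $\bar\eta$ fixes $Z(L)$ pointwise, the $KL$-module with $L$ acting by $g\mapsto\rho(g^{\bar\eta})$ has the same central character as $W$, hence is isomorphic to $W$, so there is $\tilde\eta_{0}\in\operatorname{GL}(W)$ with $\tilde\eta_{0}\rho(g)\tilde\eta_{0}^{-1}=\rho(g^{\bar\eta})$; then $\eta\cdot(\tilde\eta_{0}\otimes\operatorname{id}_U)^{-1}$ centralizes the image of $L$, hence equals $\operatorname{id}_W\otimes S$ for some $S\in\operatorname{GL}(U)$, so $\eta=\tilde\eta_{0}\otimes S$. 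As $W$ is absolutely irreducible over $L$, the $KL$-submodules of $V$ are the $W\otimes U'$ with $U'\le U$, and $S$ has an eigenvector; irreducibility of $V$ over $KL\langle\eta\rangle$ therefore forces $\dim_K U=1$, so $V\cong W$, and $\eta$ acts on $V$ as a scalar multiple $\tilde\eta$ of $\tilde\eta_{0}$. By Schur's lemma $\tilde\eta^{p^{m}}$ is scalar, so after rescaling $\tilde\eta$ (legitimate since $K=\overline K$ and $\operatorname{char}K\nmid p$) we may assume $\tilde\eta^{p^{m}}=1$, whence $\tilde\eta$ has order exactly $p^{m}$ (it induces $\bar\eta$). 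Rescaling affects neither the number of distinct eigenvalues of $\eta$ on $V$ nor the degree of its minimal polynomial, and in part~(b), where $k=m$, no rescaling is needed and $\tilde\eta=\eta|_{V}$ with $\tilde\eta^{p^{m}}=1$. Put $P=\langle\rho(L),\tilde\eta\rangle\le\operatorname{GL}(V)$; then $\rho(L)$ is normal in $P$ and $\rho(L)\cap\langle\tilde\eta\rangle=1$ (a power $\tilde\eta^{j}$, $0<j<p^{m}$, in $\rho(L)$ would make $\bar\eta^{j}$ an inner automorphism of $L$, hence trivial on $L/Z(L)$, contradicting freeness), so $|P|=r^{2t+1}p^{m}$. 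Let $m_{0},\dots,m_{p^{m}-1}\ge0$ be the multiplicities of the eigenvalues $\omega^{0},\dots,\omega^{p^{m}-1}$ of $\tilde\eta$, for a primitive $p^{m}$-th root of unity $\omega\in K$; so $\sum_{j}m_{j}=r^{t}$, and the goal is to control how many $m_{j}$ vanish.

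Next I would compute $\operatorname{End}_K(V)\cong V\otimes_K V^{*}$ restricted to $\langle\tilde\eta\rangle$ in two ways. Since $V|_{\langle\tilde\eta\rangle}=\sum_{j}m_{j}\psi_{j}$ and $V^{*}|_{\langle\tilde\eta\rangle}=\sum_{j}m_{j}\psi_{-j}$, where $\psi_{j}$ is the character $\tilde\eta\mapsto\omega^{j}$ of $\langle\tilde\eta\rangle$, we have $(V\otimes V^{*})|_{\langle\tilde\eta\rangle}=\sum_{l}\bigl(\sum_{j}m_{j}m_{j-l}\bigr)\psi_{l}$, indices modulo $p^{m}$. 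On the other hand, as a $KL$-module $\operatorname{End}_K(V)$ is the direct sum of all $r^{2t}$ linear characters $\lambda$ of $L/Z(L)$, each with multiplicity one (standard for extra-special groups, using $\operatorname{char}K\nmid r$); $P$ permutes these isotypic lines through the action of $\langle\tilde\eta\rangle=P/\rho(L)$ on the $\lambda$'s, fixing the trivial $\lambda$ (on whose line $\tilde\eta$ acts trivially) and acting freely on the other $r^{2t}-1$ of them (a nontrivial power of $\bar\eta$ fixing a nontrivial $\lambda$ would centralize a nontrivial element of $L/Z(L)$). Hence those $r^{2t}-1$ lines split into $N:=(r^{2t}-1)/p^{m}$ regular $\langle\tilde\eta\rangle$-orbits, on each of which $\langle\tilde\eta\rangle$ acts as its regular representation (here $\tilde\eta^{p^{m}}=1$ is used), so $(V\otimes V^{*})|_{\langle\tilde\eta\rangle}\cong\psi_{0}\oplus N\cdot K\langle\tilde\eta\rangle$. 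Comparing multiplicities gives $\sum_{j}m_{j}m_{j-l}=N$ for $l\not\equiv0$ and $\sum_{j}m_{j}^{2}=1+N$, whence, with $\bar m:=r^{t}/p^{m}$,
\[
\sum_{j}(m_{j}-\bar m)^{2}=\sum_{j}m_{j}^{2}-p^{m}\bar m^{2}=(1+N)-\frac{r^{2t}}{p^{m}}=1-\frac1{p^{m}}=\frac{p^{m}-1}{p^{m}}.
\]

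Finally I would solve this over the integers. Write $r^{t}=qp^{m}+s$ with $0\le s<p^{m}$ and put $k_{j}=m_{j}-q$; then $\sum_{j}k_{j}=s$ and the identity becomes $\sum_{j}k_{j}^{2}=(s^{2}+p^{m}-1)/p^{m}$, a non-negative integer, so $s^{2}\equiv1\pmod{p^{m}}$. For $p$ odd, and also for $p=2$ with $m\le2$, this forces $s\in\{1,p^{m}-1\}$; for $p=2$, $m\ge3$, the extra roots $s=2^{m-1}\pm1$ are excluded since they give $\sum_{j}k_{j}^{2}<\bigl|\sum_{j}k_{j}\bigr|$, impossible for integers. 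If $s=1$ then $\sum_{j}k_{j}^{2}=1=\sum_{j}k_{j}$, forcing one $k_{j}=1$ and the rest $0$; as $r^{t}=qp^{m}+1\ge p^{m}+1$ we have $q\ge1$, all $m_{j}\in\{q,q+1\}$ are positive, and the minimal polynomial of $\tilde\eta$ is $x^{p^{m}}-1$. If $s=p^{m}-1$ then $\sum_{j}k_{j}(k_{j}-1)=\sum_{j}k_{j}^{2}-\sum_{j}k_{j}=0$, so every $k_{j}\in\{0,1\}$ with exactly one equal to $0$; hence $m_{j}=q$ for one index and $m_{j}=q+1$ for the other $p^{m}-1$ indices, with $r^{t}=qp^{m}+p^{m}-1$, so $q\ge0$ and $r^{t}\ge p^{m}-1$. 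If $q\ge1$ all $m_{j}$ are positive and the minimal polynomial is again $x^{p^{m}}-1$; if $q=0$, then $r^{t}=p^{m}-1$, exactly one $m_{j}$ vanishes and the others equal $1$, so $\tilde\eta$ has precisely $p^{m}-1$ distinct eigenvalues and minimal polynomial $(x^{p^{m}}-1)/(x-\omega^{j_{0}})$ of degree $p^{m}-1$. This proves part~(a): $\eta$ on $V$ always has at least $p^{m}-1$ distinct eigenvalues, so its minimal polynomial has at least $p^{m}-1$ distinct linear factors, hence degree at least $p^{m}-1$. For part~(b), where $k=m$ and $\tilde\eta=\eta|_{V}$: either the minimal polynomial of $\eta$ on $V$ is $x^{p^{m}}-1=x^{p^{k}}-1$, or we are in the case $q=0$, so $r^{t}=p^{m}-1=p^{k}-1$; and if moreover $p=2$, then $r^{t}=2^{k}-1$ with $r$ an odd prime and $t\ge1$, which forces $t=1$ by an elementary argument (e.g.\ the Catalan--Mihailescu theorem applied to $2^{k}-r^{t}=1$).

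The step I expect to be the main obstacle is this last one: solving $\sum_{j}k_{j}^{2}=(s^{2}+p^{m}-1)/p^{m}$ in non-negative integers and, in particular, discarding the spurious square roots of $1$ modulo $2^{m}$. A subsidiary point requiring care is the claim that $\langle\tilde\eta\rangle$ acts as its regular representation on each free orbit of lines inside $\operatorname{End}_K(V)$, which rests on the normalization $\tilde\eta^{p^{m}}=1$ together with the hypothesis that $\bar\eta$ acts freely on $L/Z(L)\setminus\{1\}$.
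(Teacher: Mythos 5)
Your proof is correct, and its centre of gravity is the same as the paper's: reduce to the case where $V$ is the absolutely irreducible $KL$-module of dimension $r^t$, then compute $\dim_K \operatorname{End}_K(V)^{\eta}$ in two ways to get $\sum_j m_j^2 = (r^{2t}-1)/p^m + 1$. You do the reduction from scratch via $V\cong W\otimes U$ and Schur's lemma, where the paper cites Huppert--Blackburn, and for the orbit count you use the character decomposition of $\operatorname{End}_K(V)$ into lines permuted by $\langle\tilde\eta\rangle$, where the paper phrases the same thing via the coset-representative basis of the enveloping algebra and the observation that the fixed subspace of each nontrivial orbit is one-dimensional; these are equivalent. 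The genuine divergence is the final elementary step. The paper applies a Cauchy--Schwarz (quadratic-arithmetic mean) inequality: assuming $l\leqslant p^m-1$ forces $r^t+1\leqslant p^m$, and a second inequality then collapses to equalities, giving $l=p^m-1$. You instead write $r^t=qp^m+s$, show $\sum_j k_j^2 = (s^2+p^m-1)/p^m$ must be an integer, hence $s^2\equiv 1\pmod{p^m}$, discard the spurious square roots for $p=2$, and solve exactly: in every case the $m_j$ lie in $\{q,q+1\}$ with at most one index missing when $q=0$. This is more work than the paper's inequality argument, but it buys you the exact multiplicity pattern, which then delivers part (b) as a corollary (the paper does not prove (b), merely cites Shult, Gross, and Huppert); it also explains the appearance of the exception $r^t=p^k-1$. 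Two small remarks: the non-trivial autocorrelation identities $\sum_j m_j m_{j-l}=N$ for $l\not\equiv 0$ that you derive from the regular-representation decomposition are never actually used (only $\sum m_j$ and $\sum m_j^2$ enter); and the phrase ``$r^t=qp^m+1\geqslant p^m+1$'' is slightly circular -- the cleaner statement is that $q=0$ would give $r^t=1$, contradicting $t\geqslant 1$. The normalization $\tilde\eta^{p^m}=1$ is genuinely needed for your finer character decomposition, whereas the paper's weaker fixed-subspace count does not require it; you correctly flag this.
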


  \begin{proof} Part (b) is the well-known result going back to the work of Dade, Gross, Shult, who modified the Hall--Higman Theorem~B in~\cite{ha-hi} for the  `non-modular' case; see, for example,~\cite[Theorem~2.2]{gro}  or~\cite[Satz~V.17.13]{hup}.

  Part (a) does not seem to have appeared in the literature; its proof is similar to the well-known proof for (b), which was really using only the number of eigenvalues of $\eta$. We write this modified proof in full for the benefit of the reader.

  By~\cite[Lemma~IX.2.5]{hup-bla2},  $V$ is an irreducible $KL$-module and has dimension $\dim_K V = r^t$ as a vector space over $K$. Then the enveloping algebra $E$ for $L$ coincides with the full matrix algebra~\cite[Theorem~3.6.2]{gor}  and $\dim_K  E = r^{2t}$. The elements of
$Z (L)$ are represented by scalar transformations and multiplication by such a transformation in $E$ is equivalent
to multiplication by the corresponding field element. Therefore any set of representatives
of the $r^{2t}$ cosets of $Z(L)$ forms a basis of the algebra $E$.

The element $\eta$ naturally acts on $E$ and we calculate the dimension of the centralizer of $\eta$ in $E$. Since $\eta^{p^m}$ belongs to the centre of the semidirect product $L\langle\eta\rangle$ by hypothesis, the subgroup $\langle\eta^{p^m}\rangle$ is represented by scalar  transformations and the action of $\langle\eta\rangle$ on $E$ factors through to the action of $\langle\bar\eta\rangle=\langle\eta\rangle/\langle\eta^{p^m}\rangle$. Since $\bar\eta$ acts regularly on $L/Z(L)\setminus\{1\}$ with orbits of length $p^m$, it acts regularly on the elements of some basis of $E$  except for one element of this basis that corresponds
to $Z(L)$ and belongs to $C_E(\eta)$. In every subspace of $E$ spanned by a non-trivial orbit of $\eta$ on this basis, the fixed point subspace is one-dimensional (spanned by  the sum of the elements of the orbit).  It follows that
$$\dim C_E (\eta ) = \frac{r^{2t}-1}{p^m}+1.$$
We now calculate the same quantity in another way. Let $a_i$, $i = 1,\dots , l$, be the multiplicities of the $l$
distinct eigenvalues of the linear transformation $\eta$. Then the matrix of $\eta$  in some basis is block-diagonal
consisting of $l$ scalar blocks with different eigenvalues on the diagonals. The centralizer of this matrix in the full matrix algebra consists of all block-diagonal matrices with the same block-partition. Therefore,
$$\dim C_E (\eta ) = \sum_{i=1}^{l}a_i^2.$$
Thus,
$$ \frac{r^{2t}-1}{p^m}+1=\sum_{i=1}^{l}a_i^2.$$
If $l= p^m$, there is nothing to prove. Suppose that $l\leqslant p^m-1$. Then, since  $\sum_{i=1}^{l}a_i=r^t$, we have
$$
\sum_{i=1}^{l}a_i^2\geqslant l\cdot \left(\frac{r^t}{l}\right)^2=\frac{r^{2t}}{l}\geqslant \frac{r^{2t}}{p^m-1},
$$
and as a result,
$$
\frac{r^{2t}-1}{p^m}+1=\sum_{i=1}^{l}a_i^2\geqslant \frac{r^{2t}}{p^m-1}.
$$
This is equivalent to saying that $r^t + 1 \leqslant p^m$. Using this inequality we now obtain that
$$r^t=\sum_{i=1}^{l}a_i \leqslant \sum_{i=1}^{l}a_i^2 = \frac{r^{2t}-1}{p^m}+1=\frac{(r^{t}-1)(r^t+1)}{p^m}+1\leqslant r^t.$$
Therefore, all the inequalities are actually equalities, in particular, $l= p^m-1$. Thus, $l\geqslant p^m-1$ in all cases, as required.
  \end{proof}

  The following lemma readily follows from Lemma~\ref{l2m} and is a further  variation on `non-modular'  Hall--Higman type theorems. Similarly to Lemma~\ref{l2m}, part (b) about the case of faithful action is a well-known result; see~\cite[Theorem~3.1]{shu} and~\cite[Theorem~4.1]{gro}.

  \begin{lemma}\label{l3}
   Suppose that $r$ is a prime, $R$ is an $r$-group, and $\langle\psi\rangle$ is a cyclic group of order $p^k$ for a prime $p\ne r$  acting (not necessarily faithfully) by automorphisms on $R$ such that the induced automorphism of $R$ has order $p^m$. Suppose that the semidirect product $R\langle\psi\rangle$ acts by linear transformations on a vector space $V$ over a field $K$ whose characteristic does not divide $p\cdot r$, and suppose that $V$ is a faithful $KR$-module.
  \begin{itemize}
    \item[ {\rm (a)}] Then
  the minimal polynomial of $\psi$ on $V$ has degree at least $p^m-p^{m-1}$.

	\item[ {\rm (b)}] If in addition $k=m$ that is, $\psi$ acts faithfully on $R$, then either the minimal polynomial of $\psi$ on $V$ is $x^{p^k}-1$, or there exist positive integers $k_0 \leqslant k$ and $t$ such that $r^t = p^{k_0}-1$ and if $p = 2$, then $t = 1$.
  \end{itemize}
  \end{lemma}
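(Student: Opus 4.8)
\ Part~(a) is a short argument on the order of a linear transformation; part~(b) is obtained by reducing $R$ to an extra-special group and then invoking Lemma~\ref{l2m}(b). For part~(a), by Lemma~\ref{l1}(a) I pass to the algebraic closure of $K$, which changes neither the degree of the minimal polynomial, nor the faithfulness of the module, nor the coprimality of the characteristic with $p\,r$. Taking a composition series of $V$ as a $KR\langle\psi\rangle$-module, the minimal polynomial of $\psi$ on $V$ is divisible by that of $\psi$ on each composition factor, so it suffices to find one factor with minimal polynomial of degree at least $p^m-p^{m-1}$. As $R$ embeds into the direct product of the quotients $R/C_R(W)$ over the composition factors $W$, all of which are $\psi$-invariant, and $\psi^{p^{m-1}}$ acts nontrivially on $R$, there is a factor $W_0$ on which $\psi^{p^{m-1}}$ acts nontrivially; hence $\psi$ induces an automorphism of order exactly $p^m$ on $R/C_R(W_0)$, and this group acts faithfully on $W_0$. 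Conjugation, by $\psi$ regarded as a linear transformation of $W_0$, on the image of $R/C_R(W_0)$ in $GL(W_0)$ then has order $p^m$, so $\psi^{p^{m-1}}$ is not the identity on $W_0$ and the transformation $\psi$ has order $p^{m'}$ on $W_0$ for some $m'\ge m$. As $\operatorname{char}K$ is coprime to $p$, this transformation is semisimple of order $p^{m'}$, a primitive $p^{m'}$-th root of unity occurs among its eigenvalues, and its minimal polynomial is divisible by $\Phi_{p^{m'}}(x)$, of degree $p^{m'}-p^{m'-1}\ge p^m-p^{m-1}$.

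For part~(b) I keep these reductions, now with $k=m$, which is preserved because $\psi$ induces an automorphism of order $p^m$ on $R/C_R(W_0)$ and hence acts faithfully there. Thus $V$ is an irreducible $KR\langle\psi\rangle$-module, $R$ acts faithfully, $|\langle\psi\rangle|=p^{k}$ and $\psi$ induces an automorphism of order $p^{k}$ on $R$; as above, the transformation $\psi$ then has order exactly $p^{k}$ on $V$. Next I pass to the $R$-homogeneous components of $V$, on which $\langle\psi\rangle$ acts transitively in an orbit of length $p^{a}$ with $a\le k$, the stabilizer of a component $V_1$ being $R\langle\psi^{p^{a}}\rangle$; by Lemma~\ref{l1}(e) together with semisimplicity, if $\psi^{p^{a}}$ has minimal polynomial $x^{p^{j}}-1$ on $V_1$ then $\psi$ has minimal polynomial $x^{p^{a+j}}-1$ on $V$. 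Using $\bigcap_{i=0}^{p^{a}-1}\psi^{i}\!\left(C_R(V_1)\right)=C_R(V)=1$, one checks that $\psi^{p^{a}}$ induces an automorphism of order exactly $p^{k-a}$ on $R/C_R(V_1)$ and acts faithfully there; so if $a\ge 1$ the statement reduces to the triple $\bigl(R/C_R(V_1),\,\psi^{p^{a}},\,V_1\bigr)$ with $\dim V_1<\dim V$, and an induction on $\dim V$ finishes this case---reassembling $x^{p^{j}}-1$ into $x^{p^{a+j}}-1$ and carrying an exceptional conclusion $r^{t}=p^{k_0}-1$, $k_0\le k-a\le k$, unchanged. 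There remains $a=0$, where $V$ is $R$-homogeneous and $R\langle\psi\rangle$-irreducible; since $|R|$ and $|\langle\psi\rangle|$ are coprime, the irreducible $R$-constituent of $V$, which is $R\langle\psi\rangle$-invariant, extends to $R\langle\psi\rangle$, and as the abelian quotient $R\langle\psi\rangle/R$ has only one-dimensional irreducibles over the algebraically closed field, the multiplicity is $1$, i.e.\ $V$ is $R$-irreducible.

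Now $R$ acts faithfully and irreducibly on $V$: then $Z(R)$ is cyclic and acts by scalars, so $\psi$, normalizing a central subgroup of $GL(V)$, centralizes $Z(R)$. By the classical structure theory of $r$-groups with a faithful irreducible representation over an algebraically closed field of coprime characteristic, $R=E\ast C$ is a central product with $C=C_R(E)$ cyclic (hence acting by scalars) and $E$ extra-special, with the standard additional configurations to be dealt with when $r=2$; here $E$ is characteristic in $R$, hence $\psi$-invariant, $\psi$ centralizes $C$ and therefore induces an automorphism of order $p^{k}$ on $E$, acting faithfully on $E$, and it centralizes $Z(E)$, while $V|_{E}$ is again faithful and irreducible. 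I decompose $E=E_1\ast\cdots\ast E_l$ into $\psi$-invariant extra-special central factors corresponding to the decomposition of the $\mathbb{F}_r\langle\bar\psi\rangle$-module $E/Z(E)$ into isotypic pieces grouped with their symplectic duals; on each $E_i$ the induced automorphism $\bar\psi$ has some order $p^{k_i}$ with $\max_i k_i=k$, acting on $E_i/Z(E_i)\setminus\{1\}$ with all orbits of length $p^{k_i}$ (on an irreducible $\mathbb{F}_r\langle\bar\psi\rangle$-module every nonzero orbit has full length), while $V\cong V_1\otimes\cdots\otimes V_l$ with $\psi$ acting as a tensor product $\psi_1\otimes\cdots\otimes\psi_l$ up to a scalar, where $\psi_i\in GL(V_i)$ normalizes $E_i$ and may be taken of order $p^{k_i}$, each $V_i$ being a faithful irreducible $KE_i$-module. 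Applying Lemma~\ref{l2m}(b) to each triple $(E_i,\langle\psi_i\rangle,V_i)$ gives: either the minimal polynomial of $\psi_i$ on $V_i$ is $x^{p^{k_i}}-1$ for every $i$, in which case the eigenvalues of $\psi$ on $V$ exhaust the $p^{\max_i k_i}=p^{k}$-th roots of unity and the minimal polynomial of $\psi$ on $V$ is $x^{p^{k}}-1$; or some factor $E_i$ yields $r^{t}=p^{k_0}-1$ with $k_0\le k_i\le k$ and $t=1$ when $p=2$, which is the exceptional alternative of the statement. Reassembling through the $a\ge1$ reductions keeps the first outcome as $x^{p^{k}}-1$ and leaves the exceptional case unchanged.

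The main obstacle is the reduction carried out in the previous paragraph: extracting from the $r$-group structure theorem---including its peculiarities for $r=2$---a $\psi$-invariant extra-special group $E$ on which the induced automorphism retains full order $p^{k}$ and acts faithfully, organizing the central product $E=E_1\ast\cdots\ast E_l$ to be $\psi$-invariant and to furnish the tensor factorization $V\cong\bigotimes_i V_i$ with $\psi=\bigotimes_i\psi_i$ (up to a scalar), and verifying that on each $E_i$ one has the uniform orbit structure and that each $V_i$ genuinely meets the irreducibility, faithfulness and homogeneity hypotheses of Lemma~\ref{l2m}. Everything else amounts to routine Clifford theory together with Lemmas~\ref{l1} and~\ref{l2m}.
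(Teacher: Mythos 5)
Your argument for part (a) has a fatal gap at the final step. Having established that the linear transformation $\psi$ has order $p^{m'}$ with $m'\geqslant m$ on $W_0$, you conclude that ``its minimal polynomial is divisible by $\Phi_{p^{m'}}(x)$''. This is false over the fields at hand. After passing to the algebraic closure (which, as you note via Lemma~\ref{l1}(a), leaves the minimal polynomial unchanged), $\psi$ is diagonalizable and its minimal polynomial is the product of $(x-\lambda)$ over its \emph{distinct} eigenvalues. Knowing that one eigenvalue is a primitive $p^{m'}$-th root of unity contributes exactly one linear factor, not all of $\Phi_{p^{m'}}(x)$. For instance $\operatorname{diag}(1,\zeta)$ with $\zeta$ a primitive $p^{m}$-th root of unity has order $p^{m}$ but minimal polynomial of degree~$2$. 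The entire content of Hall--Higman type results is precisely that the presence of the normalized $r$-group \emph{forces} $\psi$ to have many distinct eigenvalues; the order of $\psi$ as a transformation by itself yields nothing. The paper instead reduces to a minimal $\psi$-invariant subgroup $M$ of $R$ on which $\psi$ acts with order $p^m$; such $M$ is elementary abelian or special with $\psi$ irreducible on $M/[M,M]$ (this is~\cite[Theorem~5.3.7]{gor}), and Clifford theory produces an extra-special image $L$ on which Lemma~\ref{l2m}(a) does the actual eigenvalue counting.

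Part (b) has a separate gap: the ``classical structure theory'' you cite -- that an $r$-group with a faithful irreducible representation over an algebraically closed field of coprime characteristic is a central product $E\ast C$ of an extra-special group with a cyclic group -- is not a theorem. The correct hypothesis for that conclusion (P.~Hall's structure theorem) is that \emph{every characteristic abelian subgroup} is cyclic, not merely that the centre is cyclic. A cyclic centre is exactly what a faithful irreducible gives you, and it is strictly weaker: e.g.\ the modular group $\langle a,b\mid a^{r^{3}}=b^{r}=1,\; a^{b}=a^{1+r^{2}}\rangle$ of order $r^{4}$ (for $r$ odd) has cyclic centre $\langle a^{r}\rangle\cong\mathbb{Z}_{r^{2}}$ and hence a faithful irreducible, but it contains an element of order $r^{3}$ and is easily checked not to be a central product of an extra-special group with a cyclic group. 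So your reduction of $R$ to an extra-special group (and the consequent tensor factorization of $V$) does not go through. The paper sidesteps this entirely by \emph{never} decomposing $R$: it passes first to a suitable minimal $\psi$-invariant subgroup $M\leqslant R$ (where the needed structure result genuinely holds) and only then runs Clifford theory and Lemma~\ref{l2m}. You should redo both parts along those lines.
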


  Note that in the case of  $r^t = p^{k_0}-1$ either $r=2$ or $p = 2$ and $r$ is a Mersenne prime.

  \begin{proof} (a) We can assume that the field $K$ is algebraically closed, since the hypotheses of the lemma remain valid for $R\langle\psi\rangle$ regarded as a group of linear transformations of the vector space obtained by extending the ground field, while the minimal polynomial of $\psi$ will be the same by Lemma~\ref{l1}(a).

  Choose a minimal $\psi$-invariant $r$-subgroup $M$ of $R$ on which $\psi$ acts with order $p^m$, that is, on which $\psi ^{p^{m-1}}$ acts non-trivially. Then $M$ is either an elementary abelian or a non-abelian special $r$-group, $\psi$ acts irreducibly on $M/[M,M]$, and $\psi ^{p^{m-1}}$ acts non-trivially on $M /[M,M]$ and trivially on $[M,M]$ (see~\cite[Theorem~5.3.7]{gor}). Since $\psi$ acts irreducibly on $M/[M,M]$,  in particular, $C_M(\psi ^{p^{m-1}})=[M,M]$ so that $M=[M,\psi ^{p^{m-1}}]$, and all orbits of $\psi$ on $M/[M,M]\setminus\{1\}$ have length $p^m$.

 Since the characteristic of the field $K$ is coprime to $p\cdot r$, the $KM\langle\psi\rangle$-module $V$ is completely reducible. Let $W$ be an irreducible $KM\langle\psi\rangle$-submodule of $V$ on which the subgroup $M=[M,\psi ^{p^{m-1}}]$ acts non-trivially. Applying Clifford's theorem~\cite[Theorem~3.4.1]{gor} with respect to the normal subgroup $M$ we decompose $W=W_1\oplus\dots\oplus W_{p^s}$ into a sum of homogeneous $KM$-submodules, which are transitively permuted by $\langle\psi\rangle$ with $\langle\psi^{p^s}\rangle$ being the stabilizer of $W_1$ in $\langle\psi\rangle$. Then  $\langle\psi^{p^s}\rangle$  is also the stabilizer of every $W_i$ since $\langle\psi\rangle$ is abelian. Note that $p^s\leqslant p^m$. Let $L$ be the image of $M$ in its action on~$W_1$. Then $W_1$ is an irreducible
$K L\langle\psi^{p^s}\rangle$-module and a homogeneous $KL$-module.

If $L$ is abelian, then it is cyclic and central, so that $[L,\psi^{p^s}]=1$ and therefore $[L^{\psi^i},\psi^{p^s}]=1$ for all $i$ and $[L,\psi^{p^s}]$ acts trivially on $W$, whence $p^s=p^m$. Then  $\psi $ has minimal polynomial on $V$ of degree at least $p^m$ by Lemma~\ref{l1}(e).

If $L$ is not abelian, then it is an extra-special $r$-group, $\psi ^{p^s}$ acts trivially on $Z(L)$ and has regular orbits of length $|\psi ^{p^s}|/|\psi ^{p^m}|=p^{m-s}$ on $L/Z(L)$. Thus  the semidirect product $L \langle\psi^{p^s}\rangle$ and the $K L\langle\psi^{p^s}\rangle$-module $W_1$ satisfy the hypotheses of
Lemma~\ref{l2m}(a), by which the minimal polynomial of  $\psi^{p^s}$ on $W_1$ has degree at least $p^{m-s}-1$. If $m > s$, then by Lemma~\ref{l1}(e) the minimal polynomial of $\psi$ has degree at least $p^s(p^{m-s}-p^{m-s-1})=p^{m}-p^{m-1}$. If $m = s$, then the same lemma gives the lower bound $p^s \cdot 1 = p^m \geqslant p^m - p^{m-1}$. Thus, the degree of the minimal polynomial of  $\psi$ on $V$ is at least $p^{m}-p^{m-1}$ in all cases, and part (a) is proved.

 (b) This part is basically known from the papers of Shult and Gross, see~\cite[Theorem~3.1]{shu} and~\cite[Theorem~4.1]{gro}, but for completeness we also derive this result here from Lemma~\ref{l2m}(b).
 Assuming $k=m$ (that is, $\psi$ acts faithfully on $R$), we repeat the above arguments. When $L$ is abelian, we obtain that $\psi $ has minimal polynomial on $V$ of degree at least $p^k$ by Lemma~\ref{l1}(e), so it must be $x^{p^k}-1$. When $L$ is extraspecial,  the semidirect product $L \langle\psi^{p^s}\rangle$ and the $K L\langle\psi^{p^s}\rangle$-module $W_1$ satisfy the hypotheses of
Lemma~\ref{l2m}(b), by which either the minimal polynomial of  $\psi^{p^s}$ has degree at least $p^{m-s}$ or $r^t = p^{k-s}-1$ and if $p = 2$, then $t = 1$. In the first case, by Lemma~\ref{l1}(e) the minimal polynomial of  $\psi$ has degree at least $p^sp^{k-s}=p^{k}$ and then it must be  $x^{p^k}-1$. The second case corresponds to the other alternative in part (b).
  \end{proof}

\section{Reduction to an automorphism of bounded order.\\ Proof of Theorem~\ref{t0}.}\label{s-weak}

In this section we perform a reduction of the proofs of Theorems~\ref{t0} and~\ref{t1} to the case where the order of a fixed-point-free automorphism $\varphi$ satisfying an elementary abelian identity $f(x)\in \mathbb{Z}[x]$ is bounded in terms of $\deg (f(x))$. At the end of this section we finish the proof of Theorems~\ref{t0}.

Since in Theorems~\ref{t0} and~\ref{t1} we need to bound the Fitting height and $F(G)=\bigcap _qO_{q',q}(G)$,
it is sufficient to bound the Fitting height of $G/O_{q',q}(G)$ for every prime~$q$. Here $O_{q'}$ is the largest normal $q'$-subgroup, and $O_{q',q}(G)$ is the inverse image of the largest normal $q$-subgroup of $G/O_{q'}(G)$.

\begin{proposition}\label{pr-weak}
Let $G$ be a (soluble) finite  group admitting a fixed-point-free automorphism $\varphi$ satisfying an elementary abelian identity $f(x)\in \mathbb{Z}[x]$ such that $f(x)$ does not vanish modulo any prime divisor of $|G|$. Let $q$ be a prime dividing $|G|$, and let $\bar G=G/O_{q',q}(G)$. Then the order of the automorphism $\varphi|_{\bar G/F(\bar G)}$  induced by $\varphi$ on $\bar G/F(\bar G)$ is bounded in terms of $\deg(f(x))$, namely,
$$
| \varphi|_{\bar G/F(\bar G)} | \leqslant (2 \deg(f(x)))^{2 \deg(f(x))}.
$$
Moreover, the number $\alpha(|\varphi|_{\bar G/F(\bar G)}|)$ of prime divisors of $| \varphi|_{\bar G/F(\bar G)} |$ counting multiplicities  is at most $ 4 \cdot \deg(f(x))$.
\end{proposition}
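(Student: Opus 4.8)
The plan is to bound $n:=\lvert\varphi|_{\bar G/F(\bar G)}\rvert$ one prime at a time and then to assemble the two displayed inequalities from elementary estimates together with the Rosser--Schoenfeld bound of Theorem~\ref{tpnt}. First I would record the reductions packaged into the statement. Since $O_{q',q}(G)$ is characteristic in $G$, the automorphism $\varphi$ induces a fixed-point-free automorphism of $\bar G=G/O_{q',q}(G)$ by Lemma~\ref{l-fpf}, and it still satisfies the elementary abelian identity $f(x)$: every factor of a characteristic series of $\bar G$ with elementary abelian factors (for instance the refinement of the derived series of $\bar G$ to elementary abelian factors), and in particular every such factor of $\bar G/F(\bar G)$, pulls back to a characteristic elementary abelian section of $G$. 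By construction $O_q(\bar G)=1$, equivalently $F(\bar G)$ is a $q'$-group. Finally, if $S$ is a characteristic elementary abelian $s$-section of $\bar G$ then $s\mid|G|$, so $f(x)$ is non-zero modulo $s$; by Remark~\ref{r1} the minimal polynomial of $\varphi|_S$ divides $f(x)\bmod s$ and hence has degree at most $\deg(f(x))$, and by Lemma~\ref{l1}(f) the same upper bound $\deg(f(x))$ holds for the minimal polynomial on $S$ of every power of $\varphi$.

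The core of the argument is the claim that, for every prime $p$ with exact power $p^{m}\,\|\,n$, one can produce a characteristic elementary abelian section $S$ of $\bar G$ on which a suitable power of $\varphi$ has minimal polynomial of degree at least $p^{m}-p^{m-1}$; by the previous paragraph this forces $p^{m-1}(p-1)=p^{m}-p^{m-1}\leqslant\deg(f(x))$. To build $S$, write $n=p^{m}n'$ with $p\nmid n'$ and put $\psi=\varphi^{n'}$, so that $\psi$ induces on $\bar G/F(\bar G)$ an automorphism of order exactly $p^{m}$. Refining a characteristic series of $\bar G/F(\bar G)$ to elementary abelian factors, $\psi^{p^{m-1}}$ acts non-trivially on one such factor $S$, say an elementary abelian $s$-group; since $\psi^{p^{m}}=\varphi^{n}$ acts trivially on $\bar G/F(\bar G)$, the automorphism induced by $\psi$ on $S$ has order exactly $p^{m}$, and $S$ is a characteristic elementary abelian section of $\bar G$, hence of $G$.

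It remains to bound the degree of the minimal polynomial of $\psi$ on $S$ from below, and here the Hall--Higman type results of Section~\ref{s-hh} enter. If the characteristic $s$ of $S$ differs from $p$, I would use the structure of the Fitting series of $\bar G/F(\bar G)$ together with the standard structure theory of irreducible modules of soluble groups to exhibit, acting faithfully on $S$, an $r$-group $R$ (with $r\ne p$) on which $\psi$ induces an automorphism of order $p^{m}$, and then apply Lemma~\ref{l3}(a) to obtain the bound $p^{m}-p^{m-1}$. If $s=p$ one is in the `modular' situation, where the naive estimate for a unipotent element of order $p^{m}$ gives only degree $>p^{m-1}$, which is too weak; instead one must realise $\psi$ as an element of order $p^{m}$ in a $p$-soluble linear group over $\mathbb{F}_p$ with no non-trivial normal $p$-subgroup and apply Hall--Higman Theorem~B (Theorem~\ref{t-hh}), whose conclusion $d\geqslant p^{m}-p^{m-1}$ is exactly what is needed; the hypothesis $O_q(\bar G)=1$, which is precisely why one passes to $\bar G=G/O_{q',q}(G)$, is what forces the ambient linear group to have trivial $O_p$ here. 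I expect this step to be the one real obstacle: arranging in the modular case a characteristic $p$-section of $\bar G$ on which the relevant linear group genuinely has trivial $O_p$ (likely via a reduction to a minimal configuration in which $\bar G$ is as small as possible), extracting in the non-modular case an $r$-group on which $\psi$ still acts with the full order $p^{m}$ while acting faithfully on a characteristic section of the right characteristic, and keeping careful track, through Lemma~\ref{l1}(f), of the passage between the minimal polynomials of $\varphi$ and of its power $\psi$.

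Granting the claim, the remainder is bookkeeping. For each prime $p\mid n$ with $p^{m_p}\,\|\,n$ we get $p-1\leqslant p^{m_p-1}(p-1)\leqslant\deg(f(x))$, so $p\leqslant\deg(f(x))+1\leqslant 2\deg(f(x))$, and $p^{m_p}=\frac{p}{p-1}\cdot p^{m_p-1}(p-1)\leqslant 2\deg(f(x))$, whence also $m_p\leqslant\log_2(2\deg(f(x)))$. Since every prime divisor of $n$ is at most $2\deg(f(x))$, there are at most $\pi(2\deg(f(x)))$ of them; using $\pi(x)\leqslant x$ this gives
$$
n=\prod_{p\mid n}p^{m_p}\leqslant\bigl(2\deg(f(x))\bigr)^{\pi(2\deg(f(x)))}\leqslant\bigl(2\deg(f(x))\bigr)^{2\deg(f(x))},
$$
which is the first asserted bound. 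For the second, $\alpha(n)=\sum_{p\mid n}m_p\leqslant\pi(2\deg(f(x)))\cdot\log_2(2\deg(f(x)))$, which by Theorem~\ref{tpnt} is at most $1.25506\cdot\frac{2\deg(f(x))}{\ln(2\deg(f(x)))}\cdot\frac{\ln(2\deg(f(x)))}{\ln 2}=\frac{2\cdot 1.25506}{\ln 2}\deg(f(x))<4\deg(f(x))$, completing the proof.
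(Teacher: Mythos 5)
Your overall strategy — bound the $p$-part of $n:=|\varphi|_{\bar G/F(\bar G)}|$ by $2\deg f$ for each prime $p$, then finish with the Rosser--Schoenfeld estimate — is the same as the paper's, and the final bookkeeping paragraph is correct and matches the paper's computation. However, the core claim is not established, and the gap you flag yourself is a real one, not a routine verification.

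Your construction of the section $S$ is the first problem. You set $\psi=\varphi^{n'}$, so $\psi|_{\bar G/F(\bar G)}$ has order exactly $p^m$, and you then assert that on refining a characteristic series of $\bar G/F(\bar G)$ to elementary abelian factors, $\psi^{p^{m-1}}$ must act non-trivially on one of them. That inference would follow from Lemma~\ref{l-copr} if the action were coprime, but $p$ may well divide $|\bar G/F(\bar G)|$, and a non-trivial automorphism of $p$-power order can act trivially on every elementary abelian factor of such a series (this is exactly the phenomenon of stability groups). So the section $S$ on which $\psi$ has order $p^m$ need not exist inside $\bar G/F(\bar G)$, and with it the whole Hall--Higman reduction collapses. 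The paper sidesteps this by never looking for an action on a section of $\bar G/F(\bar G)$. Instead it defines $p^m$ to be the order of $\psi$ acting on a $\bar\varphi$-invariant Hall $p'$-subgroup $H$ of $\bar G$ (which exists by Lemma~\ref{l-hall}), where the action \emph{is} coprime. The key Lemma~\ref{l-order} then shows $\psi^{p^m}\in O_p(\bar G\langle\bar\varphi\rangle)\leqslant F(\bar G\langle\bar\varphi\rangle)$, hence $\psi^{p^m}$ acts trivially on $\bar G/F(\bar G)$; this is what transfers the bound on the coprime action to a bound on $|\psi|_{\bar G/F(\bar G)}|$. That step is the missing idea in your proof.

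The second problem is the Hall--Higman reduction itself, which you acknowledge as ``the one real obstacle'' and leave as a sketch. The paper carries it out on the Frattini quotient $V$ of $O_{q',q}(G)/O_{q'}(G)$, on which $\bar G$ acts faithfully, by choosing a $\bar\varphi$-invariant Sylow $r$-subgroup $R$ of $\bar G$ on which $\psi$ acts with order $p^m$ and then splitting into three cases: $r\ne q$, $p\ne q$ (non-modular, Lemma~\ref{l3}(a)); $r\ne q$, $p=q$ (modular, Theorem~\ref{t-hh}, after showing $O_{p',p}(V\bar G\langle\psi\rangle)=V\langle\psi^{p^m}\rangle$ so that the hypothesis of Theorem~B is met); and $r=q$, where one replaces $V$ by the Frattini quotient of a suitable Sylow $t$-subgroup of the $q'$-group $F(\bar G)$. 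Your remark that ``$O_q(\bar G)=1$ forces the ambient linear group to have trivial $O_p$'' only addresses $p=q$ and only the first of these configurations; the case $r=q$ requires the separate descent into $F(\bar G)$, and the non-modular cases require the faithful $R$-action to be exhibited on the right module, none of which is done. In short, everything in your proof after ``the core of the argument is the claim'' is a correct outline of what needs to be shown, but the two load-bearing points — Lemma~\ref{l-order} and the case analysis of Lemma~\ref{l-pm} — are precisely what the paper supplies and your write-up does not.
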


 Of course, a crude bound for $\alpha(|\varphi|_{\bar G/F(\bar G)}|)$ immediately follows from a bound for $|\varphi|_{\bar G/F(\bar G)}|$, but we included a sharper bound, which is easily obtained in the proof.

Note that the condition that the polynomial $f(x)$ does not vanish modulo prime divisors of $|G|$ is automatically satisfied in Theorem~\ref{t0}. Moreover, the definition of the set $\sigma(f(x))$ will ensure that this condition is also satisfied in Theorem~\ref{t1}. In what follows, when we will be using the fact that $\varphi$ satisfies $f(x)$ on some elementary abelian characteristic section, it will only matter that $f(x)$ reduced modulo some prime is non-zero, so the minimal polynomial of $\varphi$ regarded as a linear transformation of this section has degree at most $\deg(f(x))$.

\begin{proof}
 The quotient $\bar G=G/O_{q',q}(G)$ acts faithfully on the Frattini quotient $V$ of $O_{q',q}(G)/O_{q'}(G)$. Let  $\bar\varphi$ be the induced automorphism of $V\bar G$, which is fixed-point-free by Lemma~\ref{l-fpf}. Let $p$ be a prime divisor of $|\bar\varphi |$ and let $|\bar\varphi |=s_pp^{k_p}$ for $s_p$ coprime to $p$. For brevity let us write simply $s=s_p$ and $k=k_p$ when we focus on this prime $p$. Note that it may happen that $p=q$ or $p\ne q$.

We write $\psi =\bar\varphi^s$, which is  a generator of the Sylow $p$-subgroup of $\langle\bar\varphi\rangle$.
Let $p^m$  be the order of the automorphism induced by $\psi $ on a $\bar\varphi$-invariant Hall $p'$-subgroup $H$  of~$\bar G$, which exists by Lemma~\ref{l-hall}.

\begin{lemma}\label{l-order}
The order of the automorphism of $\bar G/F(\bar G)$  induced by $\psi$ is at most~$p^m$; in particular, if  $\psi $ centralizes a $\bar\varphi$-invariant Hall $p'$-subgroup of~$\bar G$, then $\psi$ acts trivially on $\bar G/F(\bar G)$.
\end{lemma}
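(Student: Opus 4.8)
The plan is to prove the apparently stronger statement that $\psi^{p^m}$ acts trivially on $\bar G/F(\bar G)$, i.e.\ that $[\bar G,\psi^{p^m}]\leqslant F(\bar G)$. Granting this, the image of $\psi$ in $\operatorname{Aut}(\bar G/F(\bar G))$ has $p$-power order (since $\psi$ does) and is killed by raising to the $p^m$-th power, so its order divides $p^m$; and the final clause of the lemma is obtained by running the same computation with $\psi$ itself in place of $\psi^{p^m}$, the hypothesis there being exactly that $\psi$ centralizes the relevant Hall $p'$-subgroup.

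In addition to the $\bar\varphi$-invariant Hall $p'$-subgroup $H$ I would fix a $\bar\varphi$-invariant Sylow $p$-subgroup $P$ of $\bar G$, which exists by Lemma~\ref{l-hall} applied to the set of primes $\{p\}$; both $H$ and $P$, being $\bar\varphi$-invariant, are $\psi$-invariant and hence $\psi^{p^m}$-invariant. Since $\gcd(|H|,|P|)=1$ we have $H\cap P=1$, so $|HP|=|H|\,|P|=|\bar G|$ and therefore $\bar G=HP$ as a set: every $g\in\bar G$ has a unique expression $g=hz$ with $h\in H$ and $z\in P$.

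The core step would be a single commutator computation. By the definition of $p^m$ the automorphism $\psi^{p^m}$ centralizes $H$, so for $g=hz$ as above
$$[g,\psi^{p^m}]=g^{-1}g^{\psi^{p^m}}=z^{-1}h^{-1}\,h\,z^{\psi^{p^m}}=z^{-1}z^{\psi^{p^m}}\in P ,$$
using $h^{\psi^{p^m}}=h$ and $P^{\psi^{p^m}}=P$. Hence every generator of $[\bar G,\psi^{p^m}]$ lies in $P$, so $[\bar G,\psi^{p^m}]\leqslant P$. But $[\bar G,\psi^{p^m}]$ is normal in $\bar G$ (by the identity $[g,a]^{g'}=[gg',a]\,[g',a]^{-1}$), and a normal subgroup of $\bar G$ contained in a Sylow $p$-subgroup is a normal $p$-subgroup, hence lies in $O_p(\bar G)\leqslant F(\bar G)$. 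Therefore $[\bar G,\psi^{p^m}]\leqslant F(\bar G)$, which is what was needed.

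I do not anticipate a genuine obstacle; the points one must not overlook are that one should pass from $\psi$ to $\psi^{p^m}$ before computing, that combining the $\bar\varphi$-invariant Hall $p'$-subgroup with a $\bar\varphi$-invariant Sylow $p$-subgroup yields the factorization $\bar G=HP$ that drives the commutator identity, and --- the genuinely essential point --- that $[\bar G,\psi^{p^m}]$ is \emph{normal} in $\bar G$, which is exactly what turns ``contained in a Sylow $p$-subgroup'' into ``contained in $O_p(\bar G)$'' and hence in $F(\bar G)$.
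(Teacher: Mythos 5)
Your proof is correct and uses the same core idea as the paper, but executes it slightly differently. The paper works inside the semidirect product $\bar G\langle\bar\varphi\rangle$: it uses the coset factorization $\bar G\langle\bar\varphi\rangle=H_1P$ (with $H_1$ a Hall $p'$-subgroup centralized by $\psi^{p^m}$ and $P$ a Sylow $p$-subgroup containing $\psi^{p^m}$) to show that \emph{all conjugates of $\psi^{p^m}$ lie in $P$}, so that $\psi^{p^m}\in O_p(\bar G\langle\bar\varphi\rangle)\leqslant F(\bar G\langle\bar\varphi\rangle)$, and then invokes $F(\bar G)=\bar G\cap F(\bar G\langle\bar\varphi\rangle)$. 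You instead work entirely inside $\bar G$: you use the factorization $\bar G=HP$ with a $\bar\varphi$-invariant Hall $p'$-subgroup $H$ and Sylow $p$-subgroup $P$, compute $[g,\psi^{p^m}]=z^{-1}z^{\psi^{p^m}}\in P$ directly, and conclude that $[\bar G,\psi^{p^m}]$ is a normal $p$-subgroup of $\bar G$ and hence lies in $O_p(\bar G)\leqslant F(\bar G)$. The two arguments exploit exactly the same phenomenon (the Hall--Sylow factorization plus $\psi^{p^m}$ centralizing the $p'$-part forces the relevant object into a Sylow $p$-subgroup and thence into $O_p$), but your version avoids passing to the larger group and avoids the auxiliary fact $F(\bar G)=\bar G\cap F(\bar G\langle\bar\varphi\rangle)$, at the small cost of having to observe explicitly that $[\bar G,\psi^{p^m}]$ is normal in $\bar G$. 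Both are clean; yours is marginally more self-contained. Your remark about the final clause is fine, though it is worth noting that the hypothesis there literally forces $m=0$, so that clause is already the $m=0$ instance of the main claim rather than requiring a separate run of the computation.
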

\begin{proof}
  We claim that $\psi^{p^m}\in O_p(\bar G\langle\bar\varphi\rangle)\leqslant F(\bar G\langle\bar\varphi\rangle)$. Indeed,
    by definition $\psi^{p^m}$  centralizes a $\varphi$-invariant  Hall $p'$-subgroup $H$ of $\bar G$, as well as, obviously, the Hall $p'$-subgroup of~$\langle \bar{\varphi}  \rangle$. Hence $\psi^{p^m}$  centralizes a Hall $p'$-subgroup $H_1$ of $\bar G \langle \bar{\varphi}  \rangle$. Since  $\bar G \langle \bar{\varphi}  \rangle=H_1P$, where $P$ is a Sylow $p$-subgroup containing  $\psi^{p^m}$,  all conjugates of $\psi^{p^m}$ belong to $P$ and therefore generate a normal $p$-subgroup contained in $O_p(\bar G\langle\bar\varphi\rangle)$. Thus, $\psi^{p^m}\in F(\bar G\langle\bar\varphi\rangle)$, and since $F(\bar G)=\bar G\cap F(\bar G\langle\bar\varphi\rangle)$,  the order of the automorphism of $\bar G/F(\bar G)$  induced by $\psi$ is at most~$p^m$.
\end{proof}

\begin{lemma}\label{l-pm}
The number $p^m$ is bounded in terms of $\operatorname{deg}(f(x))$, namely, $p^m \leqslant 2 \deg(f(x))$.
\end{lemma}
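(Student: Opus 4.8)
The plan is to read off $p^m$ from the degree of the minimal polynomial of $\psi$ on a suitable characteristic elementary abelian section of $G$ — a degree which is at most $\deg f(x)$ because $\varphi$ satisfies the elementary abelian identity $f(x)$ and $f(x)$ does not vanish modulo the relevant prime — by invoking a Hall--Higman type theorem. First I would trim $H$ down to a single $r$-subgroup. Since $\psi$ is a $p$-element it acts coprimely on the soluble $p'$-group $H$, and for a coprime action the centraliser of the Fitting subgroup coincides with the centraliser of the whole group; applying this to the powers of $\psi$ shows that $\psi$ already has order $p^m$ on $F(H)=\prod_r O_r(H)$, hence on $R:=O_r(H)$ for some prime $r\ne p$, and then $\psi^{p^{m-1}}$ acts non-trivially on $R$.

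Next I produce a characteristic elementary abelian section of $G$ on which $R$ acts faithfully and whose characteristic is different from $r$. If $r\ne q$ this section is simply $V$, since $\bar G$ (hence $R$) acts faithfully on $V$ and $\operatorname{char}V=q\ne r$. If $r=q$, I use instead the Frattini quotient of $F(\bar G)$: since $O_q(\bar G)=1$ (the quotient defining $\bar G$ removes the $q$-part of every Fitting layer), $F(\bar G)$ is a $q'$-group, so its Frattini quotient is a direct sum of characteristic elementary abelian $\ell$-sections of $G$ with $\ell\ne q=r$; moreover $R$ is a $q$-group acting coprimely on the $q'$-group $F(\bar G)$ and trivially on its own intersection with it, so $R$ acts faithfully on this Frattini quotient, and replacing $R$ by $R/C_R(W_\ell)$ for a primary summand $W_\ell$ on which $\psi^{p^{m-1}}$ is still non-trivial (such $\ell$ exists because $\bigcap_\ell C_R(W_\ell)=1$) brings us back to an $r$-group acting faithfully. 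In either case I now have an $r$-group $R$ acting faithfully on a characteristic elementary abelian section $S$ of characteristic $\ell\ne r$, with $\psi$ of order $p^m$ on $R$; since $R$ acts faithfully on $S$, a short commutator computation inside the induced linear group shows $\psi^{p^{m-1}}$ is non-trivial on $S$ as well, so $\psi$ has order $p^{m''}\ge p^m$ on $S$.

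Finally there are two alternatives. If $\ell\ne p$, then $R\langle\psi\rangle$ acting on the faithful module $S$ (whose characteristic is prime to $p\cdot r$) satisfies the hypotheses of Lemma~\ref{l3}(a), so the minimal polynomial of $\psi$ on $S$ has degree at least $p^m-p^{m-1}$. If $\ell=p$, then $\psi$ is a $p$-element acting in characteristic $p$, and Hall--Higman Theorem~B (Theorem~\ref{t-hh}, together with the remark following it) applied to $\psi$ in the $p$-soluble linear group induced on $S$ gives that its minimal polynomial there has degree at least $p^{m''}-p^{m''-1}\ge p^m-p^{m-1}$. On the other hand $S$ is a characteristic elementary abelian section and $f(x)\not\equiv 0\pmod{\ell}$, so the minimal polynomial of $\varphi$ on $S$, and hence — by Lemma~\ref{l1}(f), since $\psi$ acts on $S$ as a power of $\varphi$ — of $\psi$ on $S$, has degree at most $\deg f(x)$. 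Combining, $p^m-p^{m-1}\le\deg f(x)$, and since $p/(p-1)\le 2$ this yields $p^m\le 2\deg f(x)$. The delicate point is the modular alternative $\ell=p$: Theorem~\ref{t-hh} requires the ambient linear group to have trivial $O_p$, which is not automatic here, and I would arrange it by the standard reduction to the case where the relevant group acts irreducibly on $S$ — so that its centraliser in the linear group is the multiplicative group of a finite field, and thus a $p'$-group — after which $O_p$ of the ambient group indeed vanishes.
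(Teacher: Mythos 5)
Your argument follows the paper's architecture very closely: fix a prime $r\ne p$ and an $r$-group $R$ on which $\psi$ already acts with order $p^m$, produce a characteristic elementary abelian section $S$ of characteristic $\ell\neq r$ on which $R$ acts faithfully (taking $S=V$ if $r\ne q$ and a section of $F(\bar G)$ if $r=q$), split into the non-modular case $\ell\ne p$ (Lemma~\ref{l3}(a)) and the modular case $\ell=p$ (Theorem~\ref{t-hh}), and then play the resulting lower bound $p^m-p^{m-1}$ on the minimal polynomial of $\psi$ against the upper bound $\deg f(x)$ coming from the identity and Lemma~\ref{l1}(f). The minor variation of obtaining $R$ from $F(H)=\prod_r O_r(H)$ via the coprime fact $C_{\langle\psi\rangle}(F(H))=C_{\langle\psi\rangle}(H)$ rather than from a $\psi$-invariant Sylow $r$-subgroup of $H$ is perfectly sound and essentially interchangeable with what the paper does; the commutator observation that faithfulness of $R$ on $S$ forces $\psi^{p^{m-1}}\ne 1$ on $S$ is also correct.

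The genuine gap is exactly the point you flag as ``delicate,'' namely the modular alternative $\ell=p$. Hall--Higman Theorem~B requires the linear group to have trivial $O_p$, and here $\psi$ itself is a $p$-element in characteristic $p$, so the image of $R\langle\psi\rangle$ in $\operatorname{GL}(S)$ will typically have a nontrivial normal $p$-subgroup (at least $\langle\bar\psi^{p^m}\rangle$, where $\bar\psi$ is the image of $\psi$). Your proposed remedy — ``reduce to the case where the relevant group acts irreducibly on $S$'' — does make $O_p$ vanish, but it loses control of the order of $\psi$: since $\psi$ acts unipotently in characteristic $p$, it can very well act trivially on every irreducible composition factor of $S$ while being nontrivial on $S$, and more generally the order of $\psi$ modulo the kernel of the action on an irreducible piece can drop below $p^m$. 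Likewise, one cannot in general choose an irreducible $L$-submodule on which $R$ remains faithful, so the commutator trick that gave you the lower bound on the order of $\psi$ also breaks. The paper avoids all of this by a different and more robust device: it works inside the larger group $V\bar G\langle\psi\rangle$, verifies $O_{p'}(V\bar G\langle\psi\rangle)=1$ and $O_{p',p}(V\bar G\langle\psi\rangle)=V\langle\psi^{p^m}\rangle$, and then applies Theorem~\ref{t-hh} to $V\bar G\langle\psi\rangle/O_{p',p}(\cdot)$ acting on the Frattini quotient $U$ of $O_{p',p}/O_{p'}$, which decomposes as $U=(V/[V,\psi^{p^m}])\times(\langle\psi^{p^m}\rangle/\langle\psi^{p^{m+1}}\rangle)$; the bound on the minimal polynomial then transfers to $V$ because the second factor is centralized by $\psi$. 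This construction simultaneously guarantees the $O_p$-hypothesis and keeps the order of the image of $\psi$ equal to $p^m$, which is precisely what your irreducible-restriction sketch does not secure.
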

\begin{proof}
By Lemma~\ref{l-hall} for every prime divisor $r$ of $|H|$ there is a $\bar\varphi$-invariant Sylow $r$-subgroup of $H$. Clearly, $\psi$ must act as an automorphism of order $p^m$ on at least one such Sylow subgroup. Let $R$ be a $\psi$-invariant Sylow $r$-subgroup of $\bar G$ for some prime $r\ne p$ on which $\psi$ acts with order $p^m$, that is, on which $\psi ^{p^{m-1}}$ acts non-trivially. The semidirect product $R\langle\psi\rangle$ acts by linear transformations on $V$, and $V$ is a faithful $\mathbb{F}_qR$-module. We claim that $p^m - p^{m-1} \leqslant \deg(f(x))$, which implies the required bound.

First suppose that  $r\ne q$. Then the action of the semidirect product $R\langle\psi\rangle$  on $V$ gives rise to a Hall--Higman type situation, which may be `modular' when  $p=q$, or `non-modular' when $p\ne q$.

In the `non-modular' case $p\ne q$ we apply Lemma~\ref{l3}(a), by which the minimal polynomial of $\psi$ on $V$ has degree at least $p^m-p^{m-1}$. By Lemma~\ref{l1}(f) the  minimal polynomial of $\varphi$ on $V$ must also have degree at least $p^m-p^{m-1}$. Since $\varphi$ satisfies $f(x)$ on $V$, we obtain that
 $p^m-p^{m-1}\leqslant \operatorname{deg}(f(x))$.

In the `modular' case  $p= q$, we apply Theorem~\ref{t-hh} after certain preparation.
 Since $\bar G=G/O_{p',p}(G)$ and $V$ is the Frattini quotient of $O_{p',p}(G)/O_{p'}(G)$, while $\langle\psi \rangle$ is a $p$-group, we have $O_{p'}(V\bar G\langle\psi \rangle)=1$. Since $\langle\psi \rangle /\langle\psi ^{p^m}\rangle$ acts faithfully on a Hall $p'$-subgroup of $\bar G$, while $\psi ^{p^m}\in O_{p}(\bar G\langle\psi \rangle)$ as shown in the proof of Lemma~\ref{l-order}, we further obtain
 $$O_{p',p}(V\bar G\langle\psi \rangle)=O_{p}(V\bar G\langle\psi \rangle)=V\langle\psi^{p^m}\rangle.$$

Let $U$ be the Frattini quotient  of $O_{p',p}(V\bar G\langle\psi \rangle)/O_{p'}(V\bar G\langle\psi \rangle)$, on which the group $V\bar G\langle\psi \rangle/O_{p',p}(V\bar G\langle\psi \rangle)$ acts faithfully by conjugation satisfying the hypotheses of Theorem~\ref{t-hh}.
Since the order of the automorphism of $U$ induced by $\psi$ is $p^m$, by Theorem~\ref{t-hh} the minimal polynomial of $\psi$ on $U$ has degree at least $p^m-p^{m-1}$. We have
$$
U=(V/[V,\psi ^{p^m}])\times (\langle\psi ^{p^m}\rangle/\langle\psi ^{p^{m+1}}\rangle).
$$
Since both factors of $U$ are $\psi$-invariant and $\langle\psi ^{p^m}\rangle/\langle\psi ^{p^{m+1}}\rangle$ is centralized by $\psi$, the minimal polynomial of $\psi$ on the first factor, and therefore on $V$, must also have degree at least $p^m-p^{m-1}$. By Lemma~\ref{l1}(f) the  minimal polynomial of $\bar\varphi$ on $V$ must have degree at least $p^m-p^{m-1}$. Since $\bar\varphi$ satisfies $f(x)$ on the characteristic abelian section $V$, we obtain that
 $p^m-p^{m-1}\leqslant \operatorname{deg}(f(x))$.

Now suppose that $r=q$. In this case $R$ is a $q$-group, and we consider the action of the semidirect product $R\langle\psi\rangle$  on $F(\bar G)$, which is a nilpotent $q'$-group containing its centralizer. Since $R$ acts faithfully on $F(\bar G)$, there is some Sylow $t$-subgroup $T$ of $F(\bar G)$ for $t\ne r$ on which the subgroup $[R, \psi ^{p^{m-1}}]$ acts non-trivially. Let $\widetilde R=R/C_R(T)$. Note that by the choice of $T$ the automorphism induced by $\psi$ on $\widetilde R$ has order $p^m$. Then the action of the semidirect product $\widetilde R\langle\psi\rangle$  on the Frattini quotient $U$ of $T$ again gives rise to a Hall--Higman type situation, which may be `modular' when  $p=t$, or `non-modular' when $p\ne t$.
Using the same Hall--Higman type arguments as above, based either on Lemma~\ref{l3}(a) or on Theorem~\ref{t-hh}, we obtain in similar fashion that $p^m - p^{m-1} \leqslant \deg(f(x))$.

So in all of the above cases, we have shown that $ p^m - p^{m-1} \leqslant \deg(f(x))$, and this implies the required bound. Indeed, if $m = 0$, then $p^m \leqslant 2 \deg(f(x))$. If $m = 1$, then $p^m \leqslant p^{m-1} + \deg(f(x)) = 1 + \deg(f(x)) \leqslant 2 \deg(f(x))$. If $m \geqslant 2$, then $p^m = p p^{m-1} \leqslant 2 (p-1) p^{m-1} \leqslant 2 \deg(f(x))$.
\end{proof}

We return to the proof of the proposition. Let $p_1^{m_1} \cdots p_l^{m_l}$ be the prime factorization of $|\varphi{|_{\bar G/F(\bar G)}}|$ with distinct prime divisors $p_1, \ldots , p_l$ and corresponding multiplicities $m_1 , \ldots , m_l > 0$. Lemmas~\ref{l-order} and~\ref{l-pm} then give us the bound
$$
p_i^{m_i} \leqslant 2 \deg(f(x))
$$
for each $i \in \{1,\ldots,l\}$, and therefore  the bound $|\varphi{|_{\bar G/F(\bar G)}}| \leqslant (2 \deg(f(x)))^{2 \deg(f(x))}$.

Furthermore,  Theorem~\ref{tpnt} gives us the bound
$$
l \leqslant \pi(2 \deg(f(x))) < 1.25506  \cdot (2 \deg(f(x))) / \ln(2 \deg(f(x))),
$$
where $\pi(\cdot)$ is the usual prime counting function. For each $i \in \{1,\ldots,l\}$, we also have the obvious bound $m_i \leqslant \log_{2}(2 \deg(f(x))) = \ln(2)^{-1} \cdot \ln(2 \deg(f(x)))$. Altogether, we obtain
\begin{align*}
	\alpha(|\varphi{|_{\bar G/F(\bar G)}}|) &= m_1 + \cdots + m_l \\
&\leqslant \ln(2)^{-1} \cdot \ln(2 \deg(f(x)))\cdot l\\
	&\leqslant  \ln(2)^{-1} \cdot \ln(2 \deg(f(x))) \cdot 1.25506  \cdot (2 \deg(f(x))) / \ln(2 \deg(f(x)))  \\
	&\leqslant  4 \cdot \deg(f(x)).\tag*{\qedhere}
\end{align*}
\end{proof}

\begin{proof}[Proof of Theorem~\ref{t0}]
	Recall that $G$ is a finite (soluble) group admitting a fixed-point-free automorphism $\varphi$ satisfying an elementary abelian identity $f(x)\in \mathbb{Z}[x]$, where $f(x)$ is a primitive polynomial, and we need to obtain a bound for the Fitting height of $G$ in terms of $\operatorname{deg}(f(x))$.  Let $q$ be any prime dividing $|G|$ and define $D := (G/O_{q',q}(G))/F(G/O_{q',q}(G))$. By Proposition~\ref{pr-weak}, we have $\alpha(|\varphi{|_D}|) \leqslant 4 \deg(f(x))$. Then by Dade's theorem~\cite{dad} the Fitting height $h(D)$ of $D$ is bounded in terms of $\alpha (|\varphi{|_{D}}|)$, and Jabara's paper~\cite{jab} gives the bound $7 \cdot \alpha(|\varphi{|_{D}}|)^2 $. So, for each such prime $q$, we obtain $h(D) \leqslant 112 \cdot \deg(f(x))^2$ and therefore $h(G/O_{q',q}(G)) \leqslant 1 + 112 \cdot \deg(f(x))^2$. Since $F(G)=\bigcap_q O_{q',q}(G)$, we obtain the required bound $h(G) \leqslant 2 + 112 \cdot \deg(f(x))^2$ for the Fitting height of $G$.
\end{proof}

\section{Proof of Theorem~\ref{t1}}\label{s-gsb}

In Theorem~\ref{t1}, for a finite soluble group $G$ with a fixed-point-free automorphism $\varphi$ satisfying an elementary abelian identity $f(x)\in \mathbb{Z}[x]$, we need to obtain a bound on the Fitting height of $G$ in terms of $\operatorname{irr}(f(x))$. As a first step, we use Proposition~\ref{pr-weak} to perform a reduction to the case where the order of $\varphi$  is bounded in terms of $\deg (f(x))$. Then, roughly speaking, we can pass to a fixed-point-free automorphism $\varphi$ satisfying the elementary abelian identity $\gcd(f(x),\,  x^{|\varphi |}-1)$, which is a product of cyclotomic polynomials the number of which is at most $\operatorname{irr}(f(x))$. First we deal with this situation in the following proposition, which requires imposing additional conditions on the prime divisors of $|G|$. Then we finish the proof of Theorem~\ref{t1} after defining a finite set of  `forbidden' prime divisors of $|G|$ depending only on $f(x)$.

\begin{proposition}\label{pr-gsb}
	Let $G$ be a finite (soluble) group admitting a fixed-point-free automorphism $\varphi$ satisfying an elementary abelian identity $f(x)$ that is a product  of $c$ cyclotomic polynomials: $f(x) = \Phi_{n_1}(x) \cdots \Phi_{n_c}(x)$. Suppose that
$$  \gcd(|G|,2)=\gcd(|G|,\,|\varphi|!) = \gcd(|G|, \,n_1 \cdots n_c) = 1.
$$
Then the Fitting height $h(G)$ of $G$ is at most $ c^2$.
\end{proposition}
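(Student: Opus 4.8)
The plan is to reduce the proposition to a ``critical configuration'' and then exploit that, under the three coprimality hypotheses, \emph{every} Hall--Higman type situation one meets is both non-modular and non-exceptional. First I record what the hypotheses give. Since $\gcd(|G|,|\varphi|!)=1$, the automorphism $\varphi$ is coprime and no prime divisor of $|G|$ is $\le|\varphi|$; consequently the exceptional alternative $r^{t}=p^{k_{0}}-1$ in Lemma~\ref{l3}(b) and in Lemma~\ref{l2m}(b) can never occur, because there $r\le r^{t}<p^{k_{0}}\le|\varphi|$ would be a prime divisor of $|G|$ not exceeding $|\varphi|$. Since $\gcd(|G|,n_{1}\cdots n_{c})=1$, for every prime $q\mid|G|$ the reductions of $\Phi_{n_{1}},\dots,\Phi_{n_{c}}$ modulo $q$ are separable and pairwise coprime; hence on every $\varphi$-composition factor $S$ of $G$ the transformation $\varphi|_{S}$ is diagonalizable (its order being coprime to the characteristic), and, since its minimal polynomial divides $f$ modulo the relevant prime, that minimal polynomial is the reduction of $\prod_{i\in J(S)}\Phi_{n_{i}}$ for a non-empty $J(S)\subseteq\{1,\dots,c\}$, non-emptiness holding because $\varphi$ acts fixed-point-freely on $S$ and so has no eigenvalue $1$. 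Write $d(S)=\operatorname{lcm}(n_{i}:i\in J(S))$ for the order of $\varphi|_{S}$, so that $d(S)$ divides $\operatorname{lcm}(n_{1},\dots,n_{c})$.

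Next I would reduce to a critical configuration. By Lemmas~\ref{l-fpf} and~\ref{l-copr}, $\varphi$ acts fixed-point-freely, and hence non-trivially, on every $\varphi$-composition factor of every $\varphi$-invariant section of $G$. Standard Fitting-height reductions --- passing to chief factors through successive Fitting subgroups, discarding the part of a layer centralized by the layer above, using Lemma~\ref{l-hall} for $\varphi$-invariant Sylow subgroups, and a minimal-counterexample choice used to normalize the order of $\varphi$ on the layers --- produce a $\varphi$-invariant section $G_{0}$ with $h(G_{0})=h(G)=:h$ and a Fitting series $1=P_{0}<P_{1}<\dots<P_{h}=G_{0}$ whose layers $V_{j}=P_{j}/P_{j-1}$ are elementary abelian $q_{j}$-groups with $q_{j}\ne q_{j+1}$, such that $V_{j+1}=F(G_{0}/P_{j})$ acts faithfully on $V_{j}$ and such that the $p$-part of $\varphi$ acts faithfully on $V_{j+1}$ for the primes $p$ that will matter. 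This step is routine in outline, but the last normalization is the delicate one.

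The heart of the matter is a propagation step along this tower. Fix $j$ with $1\le j\le h-1$ and a prime $p$ with $p\mid d(V_{j+1})$; let $p^{m}$ be the $p$-part of $d(V_{j+1})$ (so $m\ge1$) and let $\psi=\varphi^{s}$, with $\gcd(s,p)=1$, be the $p$-part of $\varphi$. The group $V_{j+1}\langle\varphi\rangle$ acts $\mathbb{F}_{q_{j}}$-linearly on $V_{j}$ with $V_{j+1}$ faithful, and $q_{j}$ divides neither $p$ nor $q_{j+1}$; so Lemma~\ref{l3}(b), together with the exclusion of its exceptional case, shows that the minimal polynomial of $\psi$ on $V_{j}$ is exactly $x^{p^{m}}-1=\prod_{i=0}^{m}\Phi_{p^{i}}$. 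On the other hand, Lemma~\ref{l-power}(a) applied on $V_{j}$ shows that $\psi=\varphi^{s}$ satisfies $\prod_{i\in J(V_{j})}\Phi_{n_{i}/\gcd(n_{i},s)}$; since distinct cyclotomic polynomials with indices prime to $q_{j}$ are coprime modulo $q_{j}$, each of the $m+1$ pairwise-coprime factors $\Phi_{1},\Phi_{p},\dots,\Phi_{p^{m}}$ must divide $\prod_{i\in J(V_{j})}\Phi_{n_{i}/\gcd(n_{i},s)}$ modulo $q_{j}$, hence equals $\Phi_{n_{i}/\gcd(n_{i},s)}$ modulo $q_{j}$ for some $i\in J(V_{j})$. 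Consequently $m+1\le|J(V_{j})|\le c$, so $m\le c-1$, and moreover $J(V_{j})$ contains $m+1$ distinct indices whose $n_{i}$ have $p$-adic valuations $0,1,\dots,m$. Thus every layer order $d(V_{j+1})$ ($1\le j\le h-1$) divides $\operatorname{lcm}(n_{1},\dots,n_{c})$ and all of its prime powers are ``witnessed'', one $p$-adic valuation at a time, inside the multiset $\{n_{i}:i\in J(V_{j})\}$ of size $\le c$ sitting in the layer immediately below.

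Finally these constraints must be combined with the Shult--Gross--Berger chain mechanism (Theorem~\ref{t-sgb}): running down the Fitting series of $G_{0}$, the quantity $\alpha\bigl(\operatorname{lcm}(d(V_{j}),\dots,d(V_{h}))\bigr)$ strictly increases at each step, so $h$ is bounded by the number of prime factors, counted with multiplicity, occurring across the chain, and the witnessing constraints above have to be pushed to show that this number is at most $c^{2}$. I expect the main obstacle to be exactly this combinatorial bookkeeping: turning ``$m\le c-1$ for every prime, witnessed inside a set of size $\le c$, with the $p$-part of each layer order realized one valuation at a time in the layer below'' into the bound $c^{2}$ --- presumably by partitioning the chain into at most $c$ segments on each of which only $c$ prime factors are active, or by an induction on $c=\operatorname{irr}(f(x))$ peeling off one cyclotomic factor at a cost of $2c-1$ in Fitting height (so that $\sum_{i=1}^{c}(2i-1)=c^{2}$) --- together with the separate care needed for the bottom layer $V_{1}$, to which the propagation step does not apply, and for the faithfulness normalizations that make the non-exceptional form of Lemma~\ref{l3} available.
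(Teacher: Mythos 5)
You have correctly identified the setting: the three coprimality hypotheses force every Hall--Higman situation to be both non-modular and non-exceptional, and you have correctly worked out the key local computation (Lemma~\ref{l3}(b) gives minimal polynomial $x^{p^m}-1$, and since $\Phi_1,\Phi_p,\dots,\Phi_{p^m}$ are pairwise coprime modulo the characteristic and each must match some $\Phi_{n_i/\gcd(n_i,s)}$, one gets $m+1$ distinct indices $n_i$ with $p$-adic valuations $0,1,\dots,m$, hence $m\leqslant c-1$). This is precisely the computation the paper does. But the two places you flag as ``delicate'' or ``expected obstacles'' are in fact genuine gaps, and the bookkeeping alternatives you float are not how the proof goes.

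The first gap is the ``critical configuration'' reduction. You want a tower $P_0<\dots<P_h$ in which the $p$-part of $\varphi$ acts \emph{faithfully} on each layer $V_{j+1}$, so that Lemma~\ref{l3}(b) with $k=m$ is available. There is no way to normalize this by a minimal-counterexample choice: dropping to a section on which $\varphi$ acts faithfully changes the Fitting height. The paper does not attempt such a normalization; instead it fixes a prime $q$, sets $\bar G=G/O_{q',q}(G)$, and for each prime $p\mid|\bar\varphi|$ branches on whether the $p$-part $\psi$ of $\bar\varphi$ acts on $\bar G/F(\bar G)$ with its full order $p^k$. If not, then $\varphi|_{\bar G/F(\bar G)}$ has no eigenvalues of order divisible by $p^k$ on elementary abelian characteristic sections, so $f(x)/\Phi_{n_j}(x)$ (for some $n_j$ with $p^k\mid n_j$, which exists by Lemma~\ref{l-power}(b)) is an elementary abelian identity of $\varphi|_{\bar G/F(\bar G)}$, and one inducts on the pair $(c,\alpha(|\varphi|))$ with $c$ decreased by one. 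This polynomial-shrinking induction is the idea that replaces your faithfulness normalization, and you are missing it.

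The second gap is the ``combinatorial bookkeeping.'' You sketch re-proving a Shult--Gross--Berger-style chain bound along your tower and then ask the reader to believe the prime-factor count comes out to $c^2$; the partitioning and $2c-1$-peeling heuristics you offer do not correspond to a completed argument. The paper does not do any such chain bookkeeping. It first reduces to at most $c$ distinct primes dividing $|\varphi|$ by \emph{replacing} $\varphi$ with $\varphi^p$ whenever $\Phi_p(x)$ is absent from $f$ (Lemma~\ref{l-power}(a) keeps the cyclotomic product with $\leqslant c$ factors, $\alpha(|\varphi|)$ strictly decreases, and the coprimality conditions persist) --- a step absent from your proposal. After that, in the non-dropping branch it has $k_p\leqslant c-1$ for every $p\mid|\bar\varphi|$, hence $\alpha(|\bar\varphi|)\leqslant c(c-1)$, and applies Theorem~\ref{t-sgb} as a black box to $V\bar G$ to get $h(\bar G)\leqslant c(c-1)$ and then $h(G)\leqslant c(c-1)+1\leqslant c^2$. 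So the $c^2$ comes from $c$ primes times exponent $\leqslant c-1$ plus one for the last Fitting layer, not from a segment-by-segment chain analysis. In short: your local Hall--Higman step is right, but the global framework --- the two-parameter induction on $(c,\alpha(|\varphi|))$, the replacement $\varphi\mapsto\varphi^p$, and the dichotomy ``$p$-part drops on $\bar G/F(\bar G)$: peel off $\Phi_{n_j}$ / $p$-part persists for all $p$: bound $\alpha(|\bar\varphi|)$ and invoke SGB'' --- is the missing content.
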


Note that the coprimeness conditions on the order of $G$  ensure that $\varphi$ is a coprime automorphism of $G$ for which no exceptional situations arise in the `non-modular' Hall--Higman type arguments for automorphisms induced by powers of~$\varphi$.

\begin{proof}
We proceed by induction on the pairs $(c,\alpha(|\varphi|))$ ordered lexicographically. We formally include the case $c = 0$ as the base of the induction, which means that $f(x)=1$. By the definition of elementary abelian identity we then have $u=1$ for every $u\in S$ in every characteristic abelian section $S$ of $G$, which of course means that $G = 1$, so that indeed $h(G) = 0 \leqslant 0^2$. We may therefore assume that $c \geqslant 1$.

First we show that the number of distinct primes dividing the order of $\varphi$ can be assumed to be at most $c$, by way of possibly replacing $\varphi$ with another fixed-point-free automorphism. Suppose that for some prime $p$ the cyclotomic  polynomial $\Phi_{p}(x)$ does not occur in the factorization of $f(x)$. Then, since the eigenvalues of $\varphi$ on every elementary abelian characteristic section $S$ of $G$ are roots of $f(x)$ and $\operatorname{gcd}(|G|,n_i)=1$ for every factor $\Phi_{n_i}(x)$ of $f(x)$, the power $\varphi^p$ is also a fixed-point-free automorphism of~$S$. Therefore $\varphi^p$ is a fixed-point-free automorphism of $G$. By Lemma~\ref{l-power}(a), the automorphism $\varphi^p$ satisfies the elementary abelian identity $\prod _{i=1}^{c}\Phi_{{n_i}/{\operatorname{gcd}(n_i,p)}}(x)$ with the same number of cyclotomic factors (or possibly fewer if repeats appeared). Clearly, the coprimeness hypotheses of Proposition~\ref{pr-gsb} on $|G|$ also hold with respect to $\varphi^p$ instead of $\varphi$. Since $\alpha(|\varphi^p|)= \alpha (|\varphi |) -1$, induction completes the proof.  Hence we can assume from the outset that for every prime $p$ dividing $|\varphi|$, the cyclotomic polynomial $\Phi_{p}(x)$ does occur in the factorization of $f(x)$, and that therefore the number of distinct primes dividing $|\varphi|$ is at most $c$.

For every prime $q$ dividing $|G|$,  consider $\bar G =G/O_{q',q}(G)$ as above. It is sufficient to prove that $h(\bar G) \leqslant c^2-1$ for every $q$, as then the Fitting height of $G$ will be at most $c^2$ because $F(G)=\bigcap_qO_{q',q}(G)$. We fix the prime $q$ for what follows.
The group $\bar G$ acts faithfully on the Frattini quotient $V$ of $O_{q',q}(G)/O_{q'}(G)$. Let  $\bar\varphi$ be the induced automorphism of $V\bar G$. Let $p$ be a prime divisor of $|\bar\varphi |$ and let $|\bar\varphi |=s_pp^{k_p}$ for $s_p$ coprime to~$p$. For brevity we write  $s=s_p$ and $k=k_p$ when we focus on this prime $p$, and let $\psi =\bar\varphi^s$ be  a generator of the Sylow $p$-subgroup of $\langle\bar\varphi\rangle$.

There must be some factor  $\Phi_{n_j}(x)$ of  $f(x)$ with $p^k$ dividing $n_j$, since $p^k$ obviously divides the order of $\varphi$. Indeed, the (cyclic) Sylow $p$-subgroup of $\langle\varphi\rangle$ must act faithfully on some characteristic elementary abelian section $S$ of $G$ by Lemma~\ref{l-copr}.
Then the automorphism induced by $\varphi$ on $S$ has some eigenvalues of order divisible by $p^k$ and therefore at least one of the factors $\Phi_{n_j}(x)$ of  $f(x)$ must have $n_j$ divisible by $p^k$ by Lemma~\ref{l-power}(b).

If the automorphism induced by $\psi $ on $\bar G/F(\bar G)$ has order less than $p^k=|\psi |$, then $\varphi$ has no primitive roots of unity of order divisible by $p^k$ on any elementary abelian characteristic section of $\bar G/F(\bar G)$. Then the polynomial $f(x) / \Phi_{n_j}(x)$, with $n_j$ divisible by $p^k$, is an elementary abelian identity of the automorphism of $\bar G/F(\bar G)$ that is induced by $\varphi$. Since $f(x) / \Phi_{n_j}(x)$ has $c-1$ cyclotomic factors, by the induction hypothesis the Fitting height of $\bar G/F(\bar G)$ is at most $(c-1)^2$, so that the Fitting height of $\bar G$ is at most $(c-1)^2+1$ and therefore at most $c^2$, as required, since $c\geqslant 1$.

Therefore we can assume that the automorphism induced by $\psi $ on $\bar G/F(\bar G)$ has the same order $p^k$ as on $V \bar G$. This condition means that an application of the Hall--Higman type  arguments will result in the minimal polynomial of $\psi $ being $x^{p^k}-1$, as will follow from Lemma~\ref{l3}(b) in view of the absence of exceptional situations.

Namely, choose a $\psi$-invariant Sylow $r$-subgroup $R$ of $\bar G$ for some prime $r\ne p$ on which $\psi$ acts with order $p^k$, that is, on which $\psi ^{p^{k-1}}$ acts non-trivially. The semidirect product $R\langle\psi\rangle$ acts by linear transformations on $V$, and $V$ is a faithful $\mathbb{F}_qR$-module.

First suppose that  $r\ne q$. Then the action of the semidirect product $R\langle\psi\rangle$  on $V$ gives rise to a `non-modular' Hall--Higman type situation, since $p\ne q$ by the hypotheses on $|G|$. Since $\psi$ acts faithfully on $R$, we can apply Lemma~\ref{l3}(b), by which the minimal polynomial of $\psi$ on $V$ is $x^{p^k}-1$ in view of the  hypotheses on $|G|$.

Now suppose that $r=q$. In this case, $R$ is a $q$-group, and we consider the action of the semidirect product $R\langle\psi\rangle$  on $F(\bar G)$, which is a nilpotent $q'$-group containing its centralizer. Since $R$ acts faithfully on $F(\bar G)$, there is some Sylow $t$-subgroup $T$ of $F(\bar G)$ for $t\ne r$ on which the subgroup $[R, \psi ^{p^{k-1}}]$ acts non-trivially. Let $\widetilde R=R/C_R(T)$. Note that by the choice of $T$ the automorphism induced by $\psi$ on $\widetilde R$ has order $p^k$. Then the action of the semidirect product $\widetilde R\langle\psi\rangle$  on the Frattini quotient $U$ of $T$ again gives rise to a `non-modular' Hall--Higman type situation, since $p\ne t$. Applying Lemma~\ref{l3}(b)  we obtain in similar fashion that the minimal polynomial of $\psi$ on $T/\Phi (T)$ is $x^{p^k}-1$.

Thus, in any case, there is an elementary abelian characteristic section $S$ of $V\bar G$ on which  the minimal polynomial of $\psi$  is $x^{p^k}-1$. We regard $S$ as a vector space over a finite field, which we extend to an algebraically closed one. Since the automorphism $\bar\varphi$ is of order coprime to the characteristic, it is diagonalizable.  Since the minimal polynomial of $\psi$  on $S$ is $x^{p^k}-1$, the eigenvalues of  $\psi=\bar \varphi^s$ are all primitive $p^i$-th roots of unity for all $i=0,1,2,\dots ,k$. At the same time, these eigenvalues are the $s$-th powers of the eigenvalues of $\bar\varphi$. Hence the eigenvalues of $\bar\varphi$ include primitive roots of unity for which the highest power of $p$ dividing their order ranges over all values $p^0,p^1,p^2,\dots ,p^k$. Since $\bar\varphi$ satisfies  the polynomial $f(x)=\prod_{i=1}^{c}\Phi _{n_i}(x)$, there must be different cyclotomic factors $\Phi_{n_i}(x)$ in $f(x)$ such that the highest power of $p$ dividing the $n_i$ ranges over all values $p^0,p^1,p^2,\dots ,p^k$.  Therefore the exponent $k=k_p$ of the highest power of $p$ dividing $|\bar\varphi|$ satisfies $k\leqslant c - 1$.

Since the number of distinct primes dividing $|\varphi|$  is at most $c$ by our assumption, we obtain that $\alpha(|\bar\varphi|)\leqslant c(c-1)$. By  the Shult--Gross--Berger Theorem~\ref{t-sgb}, the Fitting height of $V\bar G=V(G/O_{q',q})$ is at most $c(c-1)$, so obviously  the Fitting height of $G/O_{q',q}$ is also at most $c(c-1)$. Since this is true for every $q$, the Fitting height of $G/F(G)$ is also at most  $c(c-1)$. As a result, the Fitting height of $G$ is at most  $c(c-1)+1\leqslant c^2$, as required, since $c\geqslant 1$.
\end{proof}

We now introduce the finite set of primes $\sigma(f(x))$ that is used in Theorem~\ref{t1}. Let $\operatorname{Res}(a(x),b(x))$ be the {resultant} of polynomials $a(x),b(x) \in \mathbb{Z}[x]$. Recall that $\operatorname{Res}(a(x),b(x))$ is an integer, which is equal to $0$ exactly when $\deg(\gcd(a(x),b(x)))>0$. Note also that $\operatorname{Res}(a(x),b(x))=a(x)a_1(x)+b(x)b_1(x)$ for some $a_1(x),b_1(x)\in \mathbb{Z}[x]$ and therefore $\operatorname{Res}(a(x),b(x))$ belongs to the ideal generated by $a(x)$ and $b(x)$.

\begin{definition} \label{d-invariants}
	Let $f(x) = a_0 + a_1 x + \cdots + a_d x^d \in \mathbb{Z}[x]$ be a non-zero polynomial of degree $d := \deg(f(x))$  with content $a := \gcd(a_0 , \ldots , a_d)$. Using the auxiliary polynomials $u(x) := x^{(2d)^{2d}!}-1$  and $ v(x) := \gcd(f(x),u(x))$ we define the (non-zero) integer
$$\rho :=\operatorname{Res}(f(x)/v(x),\,u(x)/v(x)). $$
	We then define $\sigma(f(x))$ to consist of the prime divisors of the (non-zero) integer
$$
(2d)^{2d}! \cdot a \cdot \rho .
$$
Recall that we denote by $\operatorname{irr}(f(x))$ the number of different irreducible divisors of $f(x)$, counted without multiplicity.
\end{definition}

We will only need the following four properties of these invariants.

\begin{lemma} \label{l-resultant}
	Let $f(x) \in \mathbb{Z}[x] \setminus \{0\}$ and let $p$ be a prime not in $\sigma(f(x))$. Then the following hold.
	\begin{enumerate}
		\item[\rm (a)] The prime $p$ is odd and $p> (2d)^{2d}$.
		\item[\rm (b)] The polynomial $f(x)$ does not vanish modulo $p$.
		\item[\rm (c)] If $v(x)\neq 1$, then $v(x) = \Phi_{n_1}(x) \cdots \Phi_{n_c}(x)$ with $\gcd(p,n_1 \cdots n_c) = 1$ and $c \leqslant \operatorname{irr}(f(x))$.
		\item[\rm (d)] The polynomial $\rho \cdot v(x)$ is in the ideal of $\mathbb{Z}[x]$   generated by $f(x)$ and $u(x)$.
	\end{enumerate}
\end{lemma}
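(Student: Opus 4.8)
The plan is to verify items (a)--(d) in turn; each is a short unwinding of Definition~\ref{d-invariants} together with standard facts about contents, cyclotomic polynomials and resultants. Throughout I write $N := (2d)^{2d}!$, so $u(x) = x^N - 1$, and I use that, since $p \notin \sigma(f(x))$, the prime $p$ divides none of $N$, $a$, $\rho$. For (a) I would note that every prime $\leqslant (2d)^{2d}$ occurs as a factor of $N = (2d)^{2d}!$, so $p \nmid N$ forces $p > (2d)^{2d}$, whence $p$ is odd (I may assume $d \geqslant 1$: if $f(x)$ is a non-zero constant, then Theorem~\ref{t1} is trivial as $G = 1$). For (b): the reduction of $f(x)$ modulo $p$ vanishes exactly when $p$ divides every coefficient $a_i$, i.e.\ when $p \mid a$; since $p \nmid a$, it does not vanish.

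For (c) I would argue as follows. By definition $v(x) = \gcd(f(x),u(x))$ divides $u(x) = x^N - 1$ in $\mathbb{Z}[x]$; as $x^N - 1$ is monic (hence of content $1$), $v(x)$ is primitive with leading coefficient $\pm 1$. Normalising $v(x)$ to be monic and combining Gauss's lemma with the factorisation $x^N - 1 = \prod_{m \mid N} \Phi_m(x)$ into pairwise distinct monic irreducibles of $\mathbb{Z}[x]$, I get $v(x) = \Phi_{n_1}(x) \cdots \Phi_{n_c}(x)$ for distinct divisors $n_1,\dots,n_c$ of $N$. Since $p \nmid N$ by (a), no $n_i$ is divisible by $p$, so $\gcd(p, n_1\cdots n_c) = 1$; and since $v(x) \mid f(x)$, the $\Phi_{n_i}(x)$ are $c$ pairwise distinct irreducible divisors of $f(x)$, giving $c \leqslant \operatorname{irr}(f(x))$.

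For (d): $v(x) = \gcd(f(x),u(x))$ divides both $f(x)$ and $u(x)$ in $\mathbb{Z}[x]$, so $f(x)/v(x),\, u(x)/v(x) \in \mathbb{Z}[x]$, and by maximality of the gcd these have no common factor of positive degree, so $\rho = \operatorname{Res}(f(x)/v(x),\,u(x)/v(x)) \neq 0$. By the Bézout-type property of the resultant recalled before Definition~\ref{d-invariants}, there are $q_1(x), q_2(x) \in \mathbb{Z}[x]$ with
$$
\rho = \frac{f(x)}{v(x)}\,q_1(x) + \frac{u(x)}{v(x)}\,q_2(x).
$$
Multiplying through by $v(x)$ gives $\rho \cdot v(x) = f(x)\,q_1(x) + u(x)\,q_2(x)$, so $\rho \cdot v(x)$ lies in the ideal of $\mathbb{Z}[x]$ generated by $f(x)$ and $u(x)$.

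I do not expect a genuine obstacle here; the two points needing a little care are the passage between $\mathbb{Z}[x]$ and $\mathbb{Q}[x]$ in (c) --- using Gauss's lemma to see that the $\mathbb{Z}[x]$-gcd $v(x)$ is, up to sign, a product of cyclotomic polynomials whose indices divide $N$ --- and the realisation that it is precisely the size of the exponent $(2d)^{2d}!$ in $u(x)$, together with having put the primes dividing $(2d)^{2d}!$ into $\sigma(f(x))$, that makes the coprimality $\gcd(p, n_1\cdots n_c) = 1$ in (c) automatic.
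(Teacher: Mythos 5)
Your proof is correct and takes essentially the same route as the paper's, which treats each part as a near-immediate consequence of Definition~\ref{d-invariants}; your write-up just makes explicit the Gauss's-lemma/cyclotomic-factorisation step for (c) and the B\'ezout step for (d) that the paper leaves implicit. Your aside about the degenerate case $d=0$ in (a) addresses a genuine edge case that the paper's terse proof silently ignores, but it does not change the substance.
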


\begin{proof}
 (a) This is true because $p$ is coprime to $(2d)^{2d}!$.

(b) This holds because $p$ is coprime to the content $a$ of $f(x)$.

(c) This follows from the definitions of $u(x) =  x^{(2d)^{2d}!}-1$ and $v(x)= \gcd(f(x),u(x))$ and part~(a).

(d) Note that $\rho$ is in the ideal of $\mathbb{Z}[x]$ generated by $f(x)/v(x)$ and $u(x)/v(x)$, and therefore  the polynomial $\rho \cdot v(x)$ is in the ideal of $\mathbb{Z}[x]$  generated by $f(x)$ and $u(x)$.
\end{proof}

\begin{proof}[Proof of Theorem~\ref{t1}]
	Recall that $G$ is a finite (soluble) group admitting a fixed-point-free automorphism $\varphi$ satisfying an elementary abelian identity $f(x)\in \mathbb{Z}[x] \setminus \{0\}$ and that $G$ is a $\sigma(f(x))'$-group. We need to show that $h(G) \leqslant 2 + \operatorname{irr}(f(x))^2$. We continue to use the notation of Definition~\ref{d-invariants} and we distinguish between two cases: $v(x) \neq 1$ and $v(x) = 1$.
	
	We first consider the case $v(x) \neq 1$. According to Lemma~\ref{l-resultant}(c), there are nonnegative integers $c$ and $n_1 , \ldots , n_c$ such that $v(x) = \Phi_{n_1}(x) \cdots \Phi_{n_{c}}(x)$ and $c \leqslant \operatorname{irr}(f(x))$.  Let $q$ be any prime divisor of $|G|$ and define the quotients ${\bar G} := G/O_{q',q}(G)$ and $D := \bar{G} / F(\bar{G})$, as before. It now suffices to verify  that $v(x)$ is an elementary abelian identity of the fixed-point-free automorphism $\varphi{|_D}$ of $D$  induced by $\varphi$ and that
$$
 \gcd(|D|,2) = \gcd(|D|,|\varphi{|_D}|!) = \gcd(|D|, n_1 \cdots n_c) = 1.
$$
Indeed, then we will be able to apply Proposition~\ref{pr-gsb} to $D$, $\varphi{|_D}$, and $v(x)$ in order to obtain the bound $h(D) \leqslant c^2$ and therefore $h(\bar{G}) \leqslant 1 + c^2$. Since $F(G) = \bigcap_q O_{q',q}(G)$, we will then obtain the required bound $h(G) \leqslant 2 + c^2 \leqslant 2 + \operatorname{irr}(f(x))^2$ on the Fitting height of $G$, since $c\leqslant \operatorname{irr}(f(x))$.
	
	We first verify the coprimeness conditions.  Clearly, $ \gcd(|D|,2)=1$, since $ G$ is a $\sigma(f(x))'$-group. According to Lemma~\ref{l-resultant}(b), the polynomial $f(x)$ does not vanish modulo any prime divisor of $|G|$. Therefore we can apply Proposition~\ref{pr-weak} to $G$, $\varphi$, and $f(x)$ in order to obtain the bound $|\varphi{|_D}| \leqslant (2d)^{2d}$. By Lemma~\ref{l-resultant}(a), every prime divisor $p$ of $|D|$ satisfies $p > (2d)^{2d}$. Hence, $\gcd(p,|\varphi{|_D}|!) = 1$. By Lemma~\ref{l-resultant}(c), we also have $\gcd(p, n_1 \cdots n_c) = 1$.
	
	To prove that $v(x)$ is an elementary abelian identity of $\varphi{|_D}$, we first recall that $f(x)$ is elementary abelian identity of $\varphi{|_D}$. Since $|\varphi{|_D}| \leqslant (2d)^{2d}$, the polynomial  $u(x)= x^{(2d)^{2d}!}-1$ is also an elementary abelian identity of $\varphi{|_D}$. By Lemma~\ref{l-ideal}, the elementary abelian identities of $\varphi{|_D}$ form an ideal of $\mathbb{Z}[x]$.  By Lemma~\ref{l-resultant}(d), the polynomial $\rho \cdot v(x)$ belongs to the ideal generated by $ f(x)$ and $u(x)$;  therefore $\rho \cdot v(x)$  is an elementary abelian  identity of $\varphi{|_D}$.
We have $\gcd(|D|,\rho) =1$, since $\rho$ is a product of primes in $\sigma(f(x))$ and $G$ is a $(\sigma(f(x)))'$-group by hypothesis. In accordance with Euclid's algorithm, we have $1 \in \rho \cdot \mathbb{Z}[x] + |D| \cdot \mathbb{Z}[x]$ and therefore also $v(x) \in \rho \cdot v(x) \cdot \mathbb{Z}[x] + |D| \cdot v(x) \cdot \mathbb{Z}[x]$. Clearly, $|D|$ is an elementary abelian identity of~$\varphi{|_D}$. As a result, $v(x)$ is in the ideal of elementary abelian identities of~$\varphi{|_D}$, and the case $v(x)\ne 1$ is complete as explained above.
	
	Finally, we consider the case $v(x) = 1$.  Suppose that, for some prime divisor $q$ of $|G|$, the quotient $D := (G/O_{q',q}(G))/(F(G/O_{q',q}(G)))$ is non-trivial. Then we again have the bound $|\varphi{|_D}| \leqslant (2d)^{2d}$ by Proposition~\ref{pr-weak}. The polynomial $u(x)$ is therefore again an elementary abelian identity of $\varphi{|_D}$, so that $v(x)$ is again an elementary abelian identity of $\varphi{|_D}$. Since $v(x) = 1$, the group $D$ is therefore trivial. This contradiction shows that, for every prime divisor $q$ of $|G|$, we have $h(D) = 0$ and therefore $h(G/O_{q',q}(G)) \leqslant 1$. Since $F(G) = \bigcap_{q}O_{q',q}(G)$, we obtain the required bound $h(G) \leqslant 2 \leqslant 2 + \operatorname{irr}(f(x))^2$ on the Fitting height of $G$.
\end{proof}

\section*{Acknowledgements}

The research of the second author was supported by the Austrian Science Fund (FWF) P30842 -- N35.

\end{document}